\let\@fnsymbol\@arabic
\theoremstyle{plain}
\newtheorem{theorem}{Theorem}
\newtheorem{proposition}{Proposition}
\newtheorem{notation}{Notation}
\newtheorem{lemma}{Lemma}
\newtheorem{remark}{Remark}
\newtheorem{definition}{Definition}
\newtheorem{example}{Example}
\def\I{\mathcal{I}}
\def\J{\mathcal{J}}
\def\p{\mathfrak{p}}
\DeclareMathOperator{\Gr}{Gr}
\DeclareMathOperator{\M}{M}
\DeclareMathOperator{\im}{im}
\DeclareMathOperator{\Span}{Span}
\DeclareMathOperator{\Proj}{Proj}
\DeclareMathOperator{\Spec}{Spec}
\DeclareMathOperator{\rank}{rank}
\DeclareMathOperator{\height}{height}
\title{Results on the algebraic matroid of the determinantal variety}
\author{Manolis C. Tsakiris}
\address{Key Laboratory of Mathematics Mechanization, Academy of Mathematics and Systems Science, Chinese Academy of Sciences, Beijing, 100190, China}
\email{manolis@amss.ac.cn}
\address{Dipartimento di Matematica,  Universit\`a di Genova, Via Dodecaneso 35, 16146 Genova, Italy}
\begin{document}

\begin{abstract}
We make progress towards characterizing the algebraic matroid of the determinantal variety defined by the minors of fixed size of a matrix of variables. Our main result is a novel family of base sets of the matroid, which characterizes the matroid in special cases. Our approach relies on the combinatorial notion of relaxed supports of linkage matching fields that we introduce, our interpretation of the problem of completing a matrix of bounded rank from a subset of its entries as a linear section problem on the Grassmannian, and a connection that we draw with a class of local coordinates on the Grassmannian described by Sturmfels $\&$ Zelevinsky in 1993. 
\end{abstract}

\maketitle

\section{Introduction} \label{section:Introduction}

\subsection{The determinantal variety} Determinantal varieties and ideals play an important role in algebraic geometry and commutative algebra and interact strongly with combinatorics. Since Macaulay \cite{macaulay1916algebraic}, they have been the objects of study by many researchers, e.g.,\cite{eagon1962ideals,hochster1971cohen,lascoux1978syzygies,merle1981sections,huneke1983strongly,eisenbud1988linear,sturmfels1990grobner,bernstein1993combinatorics,bruns2003grobner,boocher2012free,raicu2014local,conca2018hankel,conca2019lovasz}. The simplest and perhaps most important determinantal variety, which we will denote by $\M(r,m \times n)$, is defined over a field $k$ by the vanishing of the $(r+1)$-minors of an $m \times n$ matrix of variables. Its $k$-rational points are the $m \times n$ matrices with entries in $k$ of rank at most $r$. The variety $\M(r,m \times n)$ has been studied extensively and many good properties have been established \cite{BrunsVetter:1988}. For example, Hochster \& Eagon proved that it is Cohen-Macaulay \cite{hochster1971cohen}, Lascoux \cite{lascoux1978syzygies} gave in characteristic zero a minimal free resolution of its defining ideal, and Merle \& Giusti \cite{merle1981sections}, Eisenbud \cite{eisenbud1988linear} and Conca \& Welker \cite{conca2019lovasz} studied classes of linear sections of the variety. On the other hand, the important question of which coordinate projections from the variety have finite generic fiber, remains largely unanswered. This is tantamount to characterizing the algebraic matroid associated to $\M(r,m \times n)$, a problem for which little is known. This question is also of great significance for the machine learning problem of low-rank matrix completion, where one aims to complete a rank-$r$ matrix from its partially observed entries \cite{kiraly2015algebraic}. In this paper, we make progress in this direction.

We set up some notations. We will assume that $k$ is an infinite field and whenever $s$ is a positive integer we set $[s]=\{1,\dots,s\}$. We let $Z = \big(z_{ij}: \, (i,j) \in [m] \times [n]\big)$ be a matrix of variables $z_{ij}$, and $k[Z]=k[z_{ij}: \, (i,j) \in [m] \times [n]]$ the polynomial ring generated by these variables over $k$. We consider the determinantal ideal $I_{r+1}(Z)$ generated by all $(r+1)\times(r+1)$ minors of the matrix $Z$. This ideal is known to be prime of height $(m-r)(n-r)$ \cite{BrunsVetter:1988}. It thus defines the integral determinantal $k$-scheme $\M(r,m \times n) = \Spec k[Z]/I_{r+1}(Z)$ of dimension $r(m+n-r)$. 

\begin{notation} \label{not:rmn}
Throughout we will assume that $0<r<m \le n$.
\end{notation} 

\subsection{The algebraic matroid}  \label{subsection:TheAlgebraicMatroid} Attached to the variety $\M(r,m \times n)$ is a matroid, which we now describe from an algebraic and geometric point of view; see \cite{rosen2020algebraic} for a beautiful exposition to algebraic matroids with applications. The ground set of the matroid consists of the indices $[m] \times [n]$ of the matrix entries. The independent sets of the matroid are those subsets $\Omega \subset [m] \times [n]$ for which the images in $k[Z]/I_{r+1}(Z)$ of the $z_{ij}$'s with $(i,j) \in \Omega$ are algebraically independent over $k$. Set $k[Z_\Omega]=k[z_{ij}: \, (i,j) \in \Omega]$. Then $\Omega$ is an independent set if and only if $k[Z_\Omega] \cap I_{r+1}(Z) = 0$. Set $\mathbb{A}^{\Omega} = \Spec k[Z_\Omega]$ the affine space of dimension $\#\Omega$ associated to the matrix entries indexed by $\Omega$; the $k$-rational points of $\mathbb{A}^{\Omega}$ can be thought of as $m \times n$ matrices over $k$ with zero entries outside $\Omega$. Let $\pi_\Omega: \M(r,m \times n) \rightarrow \mathbb{A}^{\Omega}$ be the projection morphism induced by the ring homomorphism $\varphi_\Omega: k[Z_\Omega] \rightarrow k[Z]/I_{r+1}(Z)$, where $z_{ij} \mapsto z_{ij}+I_{r+1}(Z)$ for $(i,j) \in \Omega$. On the level of $k$-rational points, $\pi_\Omega$ places zeros at the entries of a matrix outside $\Omega$. The scheme-theoretic image of $\pi_\Omega$, that is the closure of the image of $\pi_\Omega$, is given by $\Spec k[Z_\Omega] / k[Z_\Omega] \cap I_{r+1}(Z)$. Hence $\Omega$ is independent if and only if $\varphi_\Omega$ is injective if and only if $\pi_\Omega$ is dominant. 

To understand the independent sets, it is enough to understand the so-called bases of the matroid, which are the maximal independent sets. By a fundamental theorem of matroid theory, every base set has the same size, called the rank of the matroid. Here, this is equal to $\dim\M(r,m \times n) = r(m+n-r)$, the transcendence degree over $k$ of the field of fractions of the integral domain $k[Z]/I_{r+1}(Z)$. Thus $\Omega$ is a base set if and only if $\#\Omega = r(m+n-r)$ and $\pi_\Omega$ is dominant. By the upper-semicontinuity of the dimension of fibers \cite{hartshorne1977algebraic,Eisenbud-2004,Vakil-AG}, this is equivalent to $\pi_\Omega$ having finite fiber over $\pi_\Omega(X)$, for every $X$ in a dense open set of $\M(r,m \times n)$.

\begin{notation}
Properties of the matroid are sometimes conveniently expressed in terms of the bipartite graph $G_\Omega$ induced by 
$\Omega \subset [m] \times [n]$. The two parts of vertices are $[m]$ and $[n]$ and there is an edge between $i \in [m]$ and $j \in [n]$ if and only if $(i,j) \in \Omega$. 
\end{notation}

\subsection{Known facts} \label{subsection:known-facts} Little is known from the existing literature about the algebraic matroid of $\M(r,m \times n)$. We give a brief summary of the main known facts. 

By what we said above, $\Omega$ is independent if and only if $k[Z_\Omega] \cap I_{r+1}(Z)=0$. As an ideal of $k[Z_\Omega]$, $k[Z_\Omega] \cap I_{r+1}(Z)$ can be understood via elimination theory \cite{Eisenbud-2004}. Indeed, it is generated by the members of a lexicographic Gr{\"o}bner basis of $I_{r+1}(Z)$ that lie in $k[Z_\Omega]$, where the variables $Z_\Omega$ are the least significant. Theoretically determining such a basis is in general very difficult. The following is an exception. 

For extreme ranks $r=1$ and $r=m-1$, where $I_{r+1}(Z)$ is generated by $2$-minors and maximal minors respectively, a universal Gr{\"o}bner basis is known. This is a set of generators for $I_{r+1}(Z)$ which is a Gr{\"o}bner basis under any term order, and thus it is a Gr{\"o}bner basis for the aforementioned lexicographic order. For $r=1$, the independent work of Sturmfels \cite{sturmfels1996grobner} and Villarreal \cite{villarreal2001monomial} implies the existence of a universal Gr{\"o}bner basis supported on the cycles of the complete bipartite graph $K_{m,n}$, e.g., see Theorem 3.1 in \cite{conca2007linear}. This makes the base sets for $r=1$ those $\Omega$'s of size $m+n-1$, for which $G_\Omega$ is a tree; Cucuringu \& Singer arrived at the same conclusion using rigidity theory \cite{singer2010uniqueness}. For $r=m-1$, Bernstein \& Zelevinsky \cite{bernstein1993combinatorics} proved that the maximal minors form a universal Gr{\"o}bner basis, a result that was later generalized in \cite{boocher2012free,conca2015universal,conca2020universal}. This makes the base sets for $r=m-1$ those $\Omega$'s of size $(m-1)(n+1)$, for which $G_\Omega$ does not contain the complete bipartite graph $K_{m,m}$. 

For $1<r\le m-2$ it is known that the $(r+1)$-minors are not a universal Gr{\"o}bner basis for $I_{r+1}(Z)$. Instead, they are a Gr{{\"o}}bner basis under any so-called diagonal or anti-diagonal term order, e.g., see \cite{bruns2022determinants} for a detailed treatment and also \cite{narasimhan1986irreducibility} and \cite{sturmfels1990grobner}. As noted by Kalkbrener $\&$ Sturmfels \cite{kalkbrener1995initial}, this yields a family of base sets for any $r$: with the partial order $(i,j) \le (i',j')$ if $i \le i'$ and $j \le j'$, an $\Omega$ is a base set if it does not contain an antichain of cardinality $r+1$, that is if $\Omega$ contains at most $r$ incomparable elements.  Already though for $r=1$, it is easy to find examples of base sets that do not satisfy this condition. Thus, this family does not characterize the entire algebraic matroid. Instead, Kalkbrener $\&$ Sturmfels proved the remarkable fact that one obtains the entire matroid of $\M(r, m \times n)$ as the union of the independence complexes of the initial ideals of $I_{r+1}(Z)$ corresponding to all lexicographic orders. On the other hand, getting a handle on the initial ideal usually requires a handle on the corresponding Gr{\"o}bner basis, and so this route appears to be equally difficult as determining $k[Z_\Omega] \cap I_{r+1}(Z)$ via elimination for all $\Omega$'s.

For $r=2$, D.I. Bernstein took a different route. He used the tropicalization of the Grassmannian $\Gr(2,m)$ given by Speyer \& Sturmfels \cite{speyer2004tropical} together with a connection with the completion of tree metrics. He then characterized the bases as those $\Omega$'s of size $2(m+n-2)$, for which $G_\Omega$ admits an acyclic orientation with no alternating trails; see \cite{bernstein2017completion} for further details. However, the tropicalization of the Grassmannian  becomes too complicated for $r \ge 3$, and so this approach appears to be difficult to apply for higher rank values. 

\subsection{Results of this paper} We call a set $\Phi = \bigcup_{j \in [m-r]} \phi_j \times \{j\} \subset [m] \times [m-r]$ an $(r,m)$-SLMF (Support of a Linkage Matching Field), if $\Phi$ satisfies the conditions 
\begin{align} 
\#\phi_j = r+1, \,  j \in [m-r] \, \, \, \, \, \, \, \, \, \text{and} \, \, \, \, \, \, \, \, \,  \# \bigcup_{j \in \J} \phi_j \ge \# \J + r, \, \, \,  \emptyset \neq \J \subseteq [m-r]. \label{eq:SLMF}
\end{align} SLMF's arise as the supports of the vertices of the Newton polytope of the product of maximal minors of an $m \times (m-r)$ matrix of variables. These were introduced by Sturmfels $\&$ Zelevinsky \cite{sturmfels1993maximal} in their effort to establish the aforementioned universal Gr{{\"o}}bner basis property of maximal minors, and have recently found applications in tropical geometry, e.g., by Fink \& Rinc\'on \cite{fink2015stiefel} and Loho \& Smith \cite{loho2020matching}. 

Here we introduce a generalization of the notion of SLMF:
\begin{notation} \label{not:Omega}
We write $\Omega = \bigcup_{j \in [n]} \omega_j \times \{j\}$ with the $\omega_j$'s subsets of $[m]$ and $\Omega_\J =  \bigcup_{j \in \J} \omega_j \times \{j\}$ whenever $\J \subset [n]$. 
\end{notation}

\begin{definition} \label{dfn:relaxedSLMF}
For $\J \subset [n]$ and $\nu$ a positive integer, we call $\Omega_\J$ a relaxed $(\nu,r,m)$-SLMF, if $\sum_{j \in \J} \max\big\{\# (\omega_j \cap \I) -r, 0 \big\} \le \nu (\# \I -r)$ for every $\I \subset [m]$ with $\# \I \ge r+1$, and equality holds for $\I=[m]$. 
\end{definition}

\noindent An $(r,m)$-SLMF is always a relaxed $(1,r,m)$-SLMF (Lemma \ref{lem:SLMF-I}), while from a relaxed $(1,r,m)$-SLMF we can always construct an $(r,m)$-SLMF (Lemma \ref{lem:SLMF-induced}). 

In \cite{sturmfels1993maximal} Sturmfels $\&$ Zelevinsky showed that a family of local coordinates on the Grassmannian $\Gr(r,m)$, already known by Gelfand, Graev $\&$ Retakh \cite{gelfand1990gamma} from an analytic point of view, could be seen as induced by $(r,m)$-SLMF's (section \ref{section:SLMF}). This connection is a key ingredient behind the main result of this paper: 

\begin{theorem} \label{thm:main}
If $\#\Omega = r(m+n-r)$ and there is a partition $[n] = \bigcup_{\ell \in [r]} \J_\ell$ with $\Omega_{\J_\ell}$ a relaxed $(1,r,m)$-SLMF $\forall \ell \in [r]$, then $\Omega$ is a base of the algebraic matroid of $\M(r, m \times n)$. 
\end{theorem}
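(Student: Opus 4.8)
The plan is to realize the coordinate projection $\pi_\Omega$ as the composition of two maps: a parametrization of $\M(r,m\times n)$ by a product of Grassmannian-type data, and a linear section problem that the SLMF structure lets me control. Concretely, a generic rank-$r$ matrix $M$ factors as $M = A B^\top$ where $A$ is $m\times r$, $B$ is $n\times r$, and one can normalize $A$ to lie in a fixed affine chart of $\Gr(r,m)$; equivalently, $M$ is determined by a point $p\in\Gr(r,m)$ together with $n$ vectors (the rows of $B$, or columns of $M$ read through the chart). The column $M_j$ indexed by $j\in[n]$ then lies in the $r$-dimensional row space of $A$, and specifying the entries $(i,j)$ with $i\in\omega_j$ amounts to imposing $\#\omega_j$ affine-linear conditions on the $r$-dimensional space of admissible columns. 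Since $\#\Omega = r(m+n-r)$, a dimension count shows the fibers of $\pi_\Omega$ are finite exactly when these linear conditions, assembled across all $j$ and combined with the $r(m-r)$ parameters on the Grassmannian, are generically independent. Thus the theorem reduces to showing that the SLMF hypothesis guarantees this independence.

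The key step is to use the Sturmfels–Zelevinsky local coordinates on $\Gr(r,m)$ attached to an $(r,m)$-SLMF (section \ref{section:SLMF}). The idea is: given the partition $[n] = \bigcup_{\ell\in[r]} \J_\ell$ with each $\Omega_{\J_\ell}$ a relaxed $(1,r,m)$-SLMF, I will peel off one column-block $\J_\ell$ at a time. For a single block $\J_\ell$, being a relaxed $(1,r,m)$-SLMF means (after invoking Lemma \ref{lem:SLMF-induced}) that it supports an honest $(r,m)$-SLMF $\Phi_\ell$, hence a system of local coordinates on $\Gr(r,m)$ in which the relevant minors become coordinates; the defining inequality $\sum_{j}\max\{\#(\omega_j\cap\I)-r,0\} \le \#\I - r$ with equality at $\I = [m]$ is exactly the combinatorial condition (Hall-type, via the linkage matching field) ensuring that the prescribed entries in block $\J_\ell$ can be matched to distinct "new" coordinate directions without over-constraining any sub-Grassmannian. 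Running this for $\ell = 1,\dots,r$ and checking that the $r$ blocks together account for all $r(m-r)$ Grassmannian parameters plus the $r(n)$ column parameters — with the "$-r$" corrections bookkept by the equality-at-$[m]$ condition — shows $\pi_\Omega$ is dominant onto an affine space of the right dimension, i.e. generically finite. Formally I would argue that the Jacobian of $\pi_\Omega$ at a generic point of $\M(r,m\times n)$, expressed in these coordinates, is block-triangular with invertible diagonal blocks, one block per $\ell\in[r]$.

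The main obstacle is making the inductive peeling rigorous: when I impose the entries of block $\J_\ell$, I am not working on the full Grassmannian but on the locus already cut out by blocks $\J_1,\dots,\J_{\ell-1}$, and I must ensure that the SLMF-induced coordinates for $\J_\ell$ remain "fresh", i.e. algebraically independent of the coordinates already consumed. This is where the precise form of Definition \ref{dfn:relaxedSLMF} — the inequality for every $\I$ with $\#\I\ge r+1$, not just $\I = [m]$ — does the work: it is the matroid-union / Rado-type condition that lets the $r$ matching fields be chosen compatibly. I expect the cleanest route is to deduce the statement from a single combined statement about the union of the $r$ relaxed SLMFs (which is why the paper phrases the conjecture combinatorially), rather than a literal induction; the risk in a literal induction is that an early block's choice of matching field blocks a later block, and ruling this out requires an exchange argument. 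I would also need to verify the edge cases $r=1$ (where the statement should recover "$G_\Omega$ is a tree") and degenerate blocks $\J_\ell=\emptyset$, and to confirm that finiteness of fibers over a dense open set — rather than over all of the image — suffices, which it does by the upper-semicontinuity argument already recalled in subsection \ref{subsection:TheAlgebraicMatroid}.
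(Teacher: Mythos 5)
Your framework (completion as a linear section problem on $\Gr(r,m)$, with the Sturmfels--Zelevinsky coordinates attached to the $(r,m)$-SLMFs extracted from each block via Lemma \ref{lem:SLMF-induced}) is the same as the paper's, but the decisive step is missing. You reduce the theorem to the claim that the linear conditions imposed by the $r$ blocks are ``generically independent,'' to be certified by a block-triangular Jacobian with invertible diagonal blocks obtained by peeling off one block at a time --- and you yourself flag that you cannot make the peeling rigorous, since all blocks constrain the same Grassmannian point and an early choice of matching field may obstruct a later one. That unproved independence claim is precisely the content of the theorem, so as written the argument is circular at its core. Your fallback, to deduce it from ``a single combined statement about the union of the $r$ relaxed SLMFs,'' conflates Theorem \ref{thm:main} with Conjecture \ref{conj:combinatorial}: the conjecture concerns the converse characterization, not this implication. (Also, the ``exactly when'' tying finite fibers to independence of the differentials is false in positive characteristic because of inseparability; only the sufficient direction would serve you, and it is the part you have not proved.)

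The paper sidesteps any Jacobian or genericity-of-independence argument by a witness-point construction that your proposal lacks, and which is the actual role of the partition. After reducing to $\#\omega_j\ge r+1$ (Lemma \ref{lem:omega-r}) and extracting $\Phi_\ell$ from $\Omega_{\J_\ell}$, one picks a $k$-point $S\in\bigcap_{\ell\in[r]}V_{\Phi_\ell}$ with basis $s_1,\dots,s_r$ and defines $X$ by $x_j=s_\ell$ for every $j\in\J_\ell$, so that each block consists of copies of a single basis vector. For any $X'$ in the fiber over $\pi_\Omega(X)$ inside the open set $U_\Omega$ where the column space stays in $\bigcap_\ell V_{\Phi_\ell}$, the observed entries give $\pi_{\phi_j^\ell}(s_\ell)\in\pi_{\phi_j^\ell}(S')$ for all $j\in[m-r]$, so Lemma \ref{lem:PointProjection} forces $s_\ell\in S'$ for every $\ell$, hence $S'=S$, and Lemma \ref{lem:x-reconstruction} recovers each column, giving a zero-dimensional fiber at this one (special, not generic) point; upper semicontinuity of fiber dimension together with $\#\Omega=\dim\M(r,m\times n)$ then yields dominance. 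This is why no exchange argument or compatibility between the $r$ matching fields is ever needed: each block only ever ``sees'' its own basis vector, so the blocks cannot interfere. Without this (or some substitute proof of your Jacobian claim), your proposal does not establish the theorem.
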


Theorem \ref{thm:main} is illustrated in Examples \ref{ex:main-multiplePoints}, \ref{ex:main} and \ref{ex:flexibility} in section \ref{section:Examples}. Another key ingredient behind Theorem \ref{thm:main} is a novel interpretation of the problem of completing a matrix $X \in k^{m \times n}$ of rank at most $r$ from a subset of its entries, as a linear section problem on the Grassmannian $\Gr(r,m)$ via Pl{\"u}cker coordinates. A natural consequence is:   

\begin{proposition} \label{prp:necessary}
If $\Omega$ is a base set of the algebraic matroid of $\M(r, m \times n)$, then $\Omega$ is a relaxed $(r,r,m)$-SLMF. 
\end{proposition}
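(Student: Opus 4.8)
The plan is to upgrade the hereditary nature of matroid independence into a dimension count carried out on a Grassmannian, following exactly the ``linear section'' interpretation advertised in the introduction.

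\emph{Reduction to rows.} For $\I\subseteq[m]$ put $\Omega'_{\I}:=\Omega\cap(\I\times[n])$, so that the $j$-th column of $\Omega'_{\I}$ is $\omega_j\cap\I$. Since the base set $\Omega$ is independent and $\Omega'_{\I}\subseteq\Omega$, the set $\Omega'_{\I}$ is independent too, because $k[Z_{\Omega'_{\I}}]\cap I_{r+1}(Z)\subseteq k[Z_{\Omega}]\cap I_{r+1}(Z)=0$. Hence $\pi_{\Omega'_{\I}}\colon\M(r,m\times n)\to\mathbb{A}^{\Omega'_{\I}}$ is dominant, and therefore $\dim\overline{\im\pi_{\Omega'_{\I}}}=\#\Omega'_{\I}=\sum_{j\in[n]}\#(\omega_j\cap\I)$. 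The whole proposition follows once this number is bounded above by $r(\#\I-r)+\sum_{j}\min\{\#(\omega_j\cap\I),r\}$.

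\emph{The Grassmannian realization.} Fix $\I$ with $\#\I\ge r+1$, write $\Gr(r,\I)$ for the Grassmannian of $r$-dimensional subspaces of $k^{\I}$, and set
$$\Gamma:=\big\{(V,v_1,\dots,v_n)\in\Gr(r,\I)\times(k^{\I})^{\,n}\ :\ v_j\in V\ \text{for all}\ j\in[n]\big\},$$
the total space of $\mathcal{S}^{\oplus n}$ over $\Gr(r,\I)$ (with $\mathcal{S}$ the tautological subbundle); it is irreducible of dimension $r(\#\I-r)+rn$. Consider the morphism
$$g\colon\Gamma\longrightarrow\prod_{j\in[n]}k^{\omega_j\cap\I},\qquad (V,v_1,\dots,v_n)\longmapsto\big(v_j|_{\omega_j\cap\I}\big)_{j\in[n]},$$
where $v_j|_{\omega_j\cap\I}\in k^{\omega_j\cap\I}$ forgets the coordinates of $v_j\in V\subseteq k^{\I}$ outside $\omega_j\cap\I$. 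Given any matrix $X$ of rank $\le r$, letting $V$ be any $r$-plane of $k^{\I}$ containing the column space of the submatrix $X_{\I}$ of $X$ on the rows $\I$, and $v_j$ the $j$-th column of $X_{\I}$, exhibits $\pi_{\Omega'_{\I}}(X)$ as a point of $\im g$; hence $\overline{\im\pi_{\Omega'_{\I}}}\subseteq\overline{\im g}$, and it suffices to bound $\dim\overline{\im g}$.

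\emph{The dimension estimate and the equality.} For $V$ in a dense open subset of $\Gr(r,\I)$ each coordinate restriction $V\to k^{\omega_j\cap\I}$ has maximal rank $\min\{\#(\omega_j\cap\I),r\}$, with kernel $V\cap k^{\I\setminus\omega_j}$ of dimension $\max\{r-\#(\omega_j\cap\I),0\}$. For a generic point $y$ of $\im g$ one may write $y=g(V,v_1,\dots,v_n)$ with such a $V$, and then $g^{-1}(y)\supseteq\{V\}\times\prod_j\big(v_j+(V\cap k^{\I\setminus\omega_j})\big)$, of dimension $\sum_j\max\{r-\#(\omega_j\cap\I),0\}$. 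Applying the fibre-dimension theorem to the dominant map $\Gamma\to\overline{\im g}$ gives
$$\dim\overline{\im g}\ \le\ \big(r(\#\I-r)+rn\big)-\sum_j\max\{r-\#(\omega_j\cap\I),0\}\ =\ r(\#\I-r)+\sum_j\min\{\#(\omega_j\cap\I),r\}.$$
Subtracting $\sum_j\min\{\#(\omega_j\cap\I),r\}$ from $\sum_j\#(\omega_j\cap\I)\le r(\#\I-r)+\sum_j\min\{\#(\omega_j\cap\I),r\}$ yields the defining inequality $\sum_j\max\{\#(\omega_j\cap\I)-r,0\}\le r(\#\I-r)$ of a relaxed $(r,r,m)$-SLMF. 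Taking $\I=[m]$ gives $\sum_j\max\{\#\omega_j-r,0\}\le r(m-r)$; conversely $\sum_j\max\{\#\omega_j-r,0\}=\#\Omega-\sum_j\min\{\#\omega_j,r\}=r(m+n-r)-\sum_j\min\{\#\omega_j,r\}\ge r(m-r)$ because $\min\{\#\omega_j,r\}\le r$, so equality holds at $\I=[m]$ (and in particular $\#\omega_j\ge r$ for every $j$).

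\emph{Main obstacle.} The conceptual content sits in the middle step — the realization of the restricted completion problem on rows $\I$ as the incidence morphism $g$ — and the technical heart is the dimension estimate: one must verify that the constructible set $\im g$ actually has the asserted dimension, i.e. that coordinate projections from a generic $r$-plane attain their maximal rank and that no special $V$ can force the image to jump in dimension. Everything else is matroid generalities and elementary counting.
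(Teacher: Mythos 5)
Your proof is correct, and it takes a genuinely different route from the paper's. The paper fixes a generic $X$, encodes the constraints $\pi_{\phi_{j,t}}(x_j)\in\pi_{\phi_{j,t}}(S)$ as $r(m-r)$ linear forms in Pl\"ucker coordinates, identifies the fibre $\pi_\Omega^{-1}(\pi_\Omega(X))$ with the zero locus of these forms on an affine chart of $\Gr(r,m)$, and then shows --- via Krull's height theorem and intersection theory in affine space --- that a violation of the relaxed $(r,r,m)$-SLMF inequality for some $\I$ concentrates too many of these forms on too few local coordinates, forcing every component of the fibre to have positive dimension. You instead exploit the hereditary property of independence: the restriction $\Omega'_\I$ of a base to the rows $\I$ is again independent, so $\pi_{\Omega'_\I}$ is dominant onto an affine space of dimension $\sum_j\#(\omega_j\cap\I)$, and you cap that dimension by factoring through the incidence variety $\Gamma$ over $\Gr(r,\I)$ and computing the generic fibre dimension of $g$. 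This is more elementary (no Pl\"ucker coordinates, no Gr\"obner or height-theorem input), it makes the inequality transparent as ``number of observed entries in the rows $\I$ is at most the dimension of the space of partially observed rank-$\le r$ matrices supported there,'' and it even dispenses with the paper's preliminary reductions via Lemmas \ref{lem:k->K} and \ref{lem:omega-r}: the equality at $\I=[m]$ drops out of pure counting and \emph{forces} $\#\omega_j\ge r$ rather than assuming it. The only points you should spell out are routine: that the $k$-rational points of $\M(r,m\times n)$ are dense over the infinite field $k$ (so the pointwise inclusion $\pi_{\Omega'_\I}(X)\in\im g$ really yields $\overline{\im\pi_{\Omega'_\I}}\subseteq\overline{\im g}$), and that the dense open set furnished by the fibre-dimension theorem meets the image of the locus where all coordinate projections of $V$ attain maximal rank, so that your lower bound on the fibre applies to a fibre of minimal dimension. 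What the paper's heavier argument buys in exchange is the explicit description of the fibre as a linear section of the Grassmannian, which is the machinery reused in the proofs of Theorem \ref{thm:main} and Proposition \ref{prp:injective}.
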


Since Theorem \ref{thm:main} gives a sufficient condition for $\Omega$ to be a base, and Proposition \ref{prp:necessary} gives a necessary condition, it is interesting to ask when these two coincide; for then, they would characterize the bases of the matroid. Note that only one direction needs to be checked: If $\Omega$ satisfies the partition of Theorem \ref{thm:main}, then it is a base set and so it is a relaxed $(r,r,m)$-SLMF by Proposition \ref{prp:necessary}. Lisa Nicklasson has recently communicated to me examples showing that these two conditions are not in general equivalent, not even after the assumption that every vertex of $G_\Omega$ has degree at least $r+1$ (Lemma \ref{lem:omega-r}). On the other hand, we have:

\begin{proposition} \label{prp:conj}
With $\#\Omega = r(m+n-r)$, the hypothesis of Theorem \ref{thm:main} is equivalent to the conclusion of Proposition \ref{prp:necessary} for i) $r=1$, ii) $r=m-1$,  iii) $r=m-2$; in these cases these conditions characterize the algebraic matroid of $\M(r,m \times n)$.
\end{proposition}

Interestingly, we also have:
\begin{proposition} \label{prp:Amini}
With $\#\Omega = r(m+n-r)$, the conditions of Theorem \ref{thm:main} and Proposition \ref{prp:necessary} are equivalent when $\# \omega_j = r+1, \,  \forall j \in [n]$; here necessarily $n=r(m-r)$.
\end{proposition}

Among all bases of the matroid of $\M(r,m \times n)$, it is of interest to characterize those for which the generic fiber consists of a single element. Our next result describes such a family of base sets. To state the result, note that if $\Omega_{\J}$ is a relaxed $(1,r,m)$-SLMF and denoting by $\Omega_j$ the set of subsets of $\omega_j$ of cardinality $r+1$, then there exist $\phi_{j'} \in \bigcup_{j \in \J} \Omega_j$ for $j' \in [m-r]$ such that $\Phi = \bigcup_{j' \in [m-r]} \phi_{j'} \times \{j'\}$ is an $(r,m)$-SLMF (Lemma \ref{lem:SLMF-induced}). We say that $\Omega_\J$ induces the $(r,m)$-SLMF $\Phi$. 

\begin{proposition} \label{prp:injective}
Suppose $\Omega$ satisfies the hypothesis of Theorem \ref{thm:main}, and that each $\Omega_{\J_\ell}$ induces the same $(r,m)$-SLMF $\Phi = \Phi_{\ell}$ for every $\ell \in [r]$. If $\operatorname{char}k =0$ or if $\operatorname{char}k=p>0$ is large enough, there exists a Zariski dense open set $U_\Omega \subset \M(r,m \times n)$, such that $\pi_\Omega^{-1}(\pi_\Omega(X)) = \{X\}$ for any $X \in U_\Omega$ ($X$ need not be closed or $k$-rational).  
\end{proposition}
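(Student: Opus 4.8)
The plan is to prove part~(1) by upgrading the dominance of $\pi_\Omega$ supplied by Theorem~\ref{thm:main} to \emph{birationality}, via an explicit rational section, and to prove part~(2) by exhibiting one explicit family. Since $\#\Omega=r(m+n-r)=\dim\M(r,m\times n)=\dim\mathbb A^\Omega$, a rational map $\rho\colon\mathbb A^\Omega\dashrightarrow\M(r,m\times n)$ with $\pi_\Omega\circ\rho=\mathrm{id}$ is automatically dominant and forces $\pi_\Omega$ to be birational; then, $\pi_\Omega$ being an affine generically finite morphism, it is finite over a dense open $V\subseteq\mathbb A^\Omega$, hence (being birational onto the normal $V$) an isomorphism over $V$, so $U_\Omega:=\pi_\Omega^{-1}(V)$ will be a dense open over whose points $X$ (closed or not) the scheme-theoretic fiber $\pi_\Omega^{-1}(\pi_\Omega(X))$ equals the reduced point $\{X\}$. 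Thus part~(1) reduces to: rationally and uniquely reconstruct a generic rank-$r$ matrix $X$ from its entries $X_\Omega$.

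I would first note that the hypothesis forces $\#\omega_j\ge r$ for all $j$: summing the identity $\#\Omega_{\J_\ell}=r\,\#\J_\ell+(m-r)-\sum_{j\in\J_\ell}\max\{r-\#\omega_j,0\}$, which is just the equality case $\I=[m]$ of Definition~\ref{dfn:relaxedSLMF}, over $\ell\in[r]$ shows $\#\Omega=r(m+n-r)$ iff $\#\omega_j\ge r$ for every $j$. Hence, once the column space $L:=\operatorname{colspan}(X)\in\Gr(r,m)$ is known, each column $X_j\in L$ is recovered rationally as the unique preimage of $(X_{ij})_{i\in\omega_j}$ under the coordinate projection $L\to k^{\omega_j}$, which is injective for generic $L$ because $\#\omega_j\ge r$. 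So everything hinges on recovering $L$. For this I would use our reading of matrix completion as a linear-section problem on $\Gr(r,m)$ in Pl\"ucker coordinates: for a column $j$ and an $(r+1)$-subset $T\subseteq\omega_j$, the solvability of ``some vector of $L$ has $T$-coordinates $(X_{ij})_{i\in T}$'' is a single linear equation in $p(L)$ whose coefficients are $\pm\,p_{T\setminus\{i\}}$; running $T$ over suitable $(r+1)$-subsets of the $\omega_j$ produces $\sum_j(\#\omega_j-r)=r(m-r)=\dim\Gr(r,m)$ linear equations satisfied by $L$.

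The heart of the argument is to solve this linear section, and this is where I expect the real difficulty. I would pass to the Gelfand--Graev--Retakh/Sturmfels--Zelevinsky local coordinates on $\Gr(r,m)$ attached to the $(r,m)$-SLMF $\Phi$, valid on the chart $U_\Phi$ of Section~\ref{section:SLMF}, which for generic $X$ contains $L$ and whose $r(m-r)$ coordinates split into $m-r$ blocks, one per part of $\Phi$. Each $\Omega_{\J_\ell}$ contributes $m-r$ of the above equations, one attached to each part $\phi_{j'}$ of the $(r,m)$-SLMF it induces (Lemma~\ref{lem:SLMF-induced}); the defining inequalities $\#\bigcup_{j\in\J}\phi_j\ge\#\J+r$ of $\Phi$ should give, by a Hall/matroid argument, a linear order $\prec$ on $[m-r]$ for which, in the $\Phi$-coordinates, the equation attached to $\phi_{j'}$ at level $\ell$ involves only the blocks $\preceq j'$ and is linear in the block-$j'$ coordinates. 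Because all the $\Omega_{\J_\ell}$ induce the \emph{same} $\Phi$, collecting over $\ell\in[r]$ yields, for each $j'$, an $r\times r$ linear system for the block-$j'$ coordinates whose coefficient matrix is generically invertible; solving block by block along $\prec$ then determines $L$, hence $X$, rationally in $X_\Omega$, and gives the section $\rho$. This is also exactly where $\operatorname{char}k=0$ or $\operatorname{char}k=p\gg0$ enters: the $m-r$ diagonal blocks are matrices of polynomials with integer coefficients and one needs their determinants not to vanish identically modulo $p$, which is likewise what makes $\pi_\Omega$ separable. The only genuinely non-routine point is this block-triangularization — extracting $\prec$ from the SLMF inequalities of $\Phi$ and verifying the claimed triangular shape together with generic invertibility of the diagonal blocks — and it is there that the $(r,m)$-SLMF combinatorics is used in full.

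For part~(2): given an $(r,m)$-SLMF $\Phi=\bigcup_{j'\in[m-r]}\phi_{j'}\times\{j'\}$, put $n=r(m-r)$, identify $[n]$ with $[r]\times[m-r]$, and set $\omega_{(\ell,j')}:=\phi_{j'}$ and $\J_\ell:=\{\ell\}\times[m-r]$. Then $\#\Omega=r(m-r)(r+1)=r(m+n-r)$, since $m+n-r=(r+1)(m-r)$. Each $\Omega_{\J_\ell}$ is a relaxed $(1,r,m)$-SLMF: because every part has cardinality $r+1$, for $\#\I\ge r+1$ the left side of Definition~\ref{dfn:relaxedSLMF} equals $\#\{j'\colon\phi_{j'}\subseteq\I\}$, and applying the SLMF inequality to $\J=\{j'\colon\phi_{j'}\subseteq\I\}$ gives $\#\I\ge\#\bigcup_{j\in\J}\phi_j\ge\#\J+r$, i.e.\ $\#\{j'\colon\phi_{j'}\subseteq\I\}\le\#\I-r$, with equality at $\I=[m]$; moreover $\Omega_{\J_\ell}$ visibly induces $\Phi$ (take $\phi_{j'}=\omega_{(\ell,j')}$). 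Hence this $\Omega$ meets all hypotheses of part~(1), which proves~(2).
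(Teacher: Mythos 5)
Your reduction of part (1) to exhibiting a rational section of $\pi_\Omega$, and your part (2), are fine; but the central step of part (1) — actually reconstructing the column space $L$ from the observed entries — is precisely the point you leave unproved, and that is a genuine gap. Your plan is to write down the $r(m-r)$ linear equations in Pl\"ucker coordinates and then solve them in the Sturmfels--Zelevinsky local coordinates attached to $\Phi$ by a ``block-triangularization'': an order $\prec$ on $[m-r]$ under which the equations coming from level $\ell$ and part $\phi_{j'}$ involve only blocks $\preceq j'$, with generically invertible $r\times r$ diagonal blocks. No such statement is established, it is not clear it holds as formulated, and nothing in the hypotheses is used to produce $\prec$; ``should give, by a Hall/matroid argument'' is exactly the missing content. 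Moreover this machinery is unnecessary: the hypothesis that all $r$ parts $\Omega_{\J_\ell}$ induce the \emph{same} $\Phi$ means that each $\phi_\alpha$ is contained in $\omega_j$ for $r$ distinct columns $j$ (one from each $\J_\ell$), so for generic $X$ those $r$ fully-observed restrictions $\pi_{\phi_\alpha}(x_j)$ span $\pi_{\phi_\alpha}(L)$; their $r\times r$ minors give $\gamma_{\phi_\alpha}(L)\in\mathbb{P}^r$ directly from the data, and Proposition \ref{prp:SZ93} (injectivity of $\gamma_\Phi$ on $V_\Phi$, with explicit inverse via \eqref{eq:Atilde}) then recovers $L$, after which Lemma \ref{lem:x-reconstruction} recovers each column. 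This is the paper's Step 1, and it is rational in the data with no ordering or coupled solve; your proof cannot be considered complete until your triangularization is either proved or replaced by this direct argument.

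Two further remarks. First, your endgame (rational section $\Rightarrow$ birational $\Rightarrow$ finite over a dense open $V$ of the normal target $\Rightarrow$ isomorphism over $V$, giving single-point scheme fibers over all of $\pi_\Omega^{-1}(V)$, non-closed points included) is a legitimate and arguably cleaner route than the paper's Steps 2--4, which instead analyze, for each $(i,j)\notin\Omega$, the circuit polynomial $f_{ij}$ via Gr\"obner bases, show $\deg_{z_{ij}}f_{ij}=1$ by a resultant/discriminant argument (this is where $\operatorname{char}k=0$ or $p\gg 0$ enters in the paper), and then localize to identify $(k[Z]/I_{r+1}(Z))_g$ with $k[Z_\Omega]_g$; note that in your approach the characteristic plays no essential role once a genuinely rational section exists, so your worry about diagonal determinants vanishing mod $p$ is an artifact of your (unproved) construction rather than the place the hypothesis is needed. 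Second, your part (2) construction ($n=r(m-r)$, $\omega_{(\ell,j')}=\phi_{j'}$, $\J_\ell=\{\ell\}\times[m-r]$) is correct and verifies the hypotheses directly via Lemma \ref{lem:SLMF-I}; the paper instead gives an inductive construction, and yours is a perfectly good, more explicit alternative.
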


Proposition \ref{prp:injective} is of significance to the problem of low-rank matrix completion \cite{kiraly2015algebraic}. There, a matrix $X$ of typically low rank $r$ is observed at only a subset $\Omega$ of its entries, and the problem is to correctly complete the matrix $\pi_\Omega(X)$. By verifying that $\Omega$ contains a subset $\Omega'$ that satisfies the conditions of Proposition \ref{prp:injective}, and assuming $X$ is sufficiently generic, one can ensure that any rank-$r$ completion of $\pi_\Omega(X)$ coincides with $X$ and thus is the correct completion, regardless of how it has been obtained. Inasmuch as understanding when $\pi_\Omega$ is generically finite is a more fundamental question than understanding when it is generically injective, the entire problem of characterizing the algebraic matroid of $\M(r, m \times n)$ is a fundamental question itself for the low-rank matrix completion problem. 

\subsection{Acknowledgments}
I am grateful to Aldo Conca for many inspiring discussions on the subject and beyond. I am also grateful to Omid Amini for communicating to me an initial proof of Proposition \ref{prp:Amini}; the elegant idea that appears in the proof here, that is to prove that the $(r,m)$-SLMF's are the base sets of a matroid and then to use the base-packing theorem of Edmonds \& Fulkerson, belongs to him. I am grateful to Lisa Nicklasson for sharing with me counterexamples to a conjecture in a previous version of this manuscript. I thank Daniel I. Bernstein and Louis Theran for useful exchanges on algebraic matroids and matrix completion, and I thank Olivier Wittenberg for a fruitful interaction regarding this paper. I acknowledge the support of the CAS Project for Young Scientists in Basic Research, Grant No. YSBR-034.

\subsection{Organization} 
In section \ref{section:SLMF} we review the connection between supports of linkage matching fields and local coordinates of the Grassmannian, which plays a crucial role in the paper. In section \ref{subsection:basic-facts} we prove ten basic lemmas that we will be using in the rest of the proofs. In particular, section \ref{subsection:Proof-main} proves Theorem \ref{thm:main}, and sections \ref{subsection:proof-prp:necessary}, \ref{subsection:proof-prp:conj}, \ref{subsection:proof-prp:Amini} and \ref{subsection:proof-prp:injective} prove Propositions \ref{prp:necessary}, \ref{prp:conj}, \ref{prp:Amini} and \ref{prp:injective}, respectively. Finally, section \ref{section:Examples} gives three examples illustrating section \ref{section:SLMF} and Theorem \ref{thm:main}.

\section{Preliminaries: local coordinates on the Grassmannian induced by SLMF's} \label{section:SLMF}

We recall afresh the beautiful relationship between SLMF's and local coordinates on the Grassmannian, established by Sturmfels \& Zelevinsky in section 4 of \cite{sturmfels1993maximal}. Our exposition is from a certain dual point of view as needed for our purpose; for further details the reader is referred to \cite{sturmfels1993maximal}.

\subsection{Conventions} \label{subsection:Conventions} Let $T=k \bigl[\,[\psi]: \, \psi \subset [m], \, \# \psi=r\,\bigr]$ be the polynomial ring over $k$ generated by variables $[\psi]$'s, where $\psi$ ranges across all subsets of $[m]$ of size $r$. Let $\mathfrak{p}$ be the ideal of $T$ generated by the Pl{\"u}cker relations \cite{BrunsVetter:1988}. We set $\Gr(r,m) = \Proj(T/\mathfrak{p})$, the Grassmannian scheme of $r$-dimensional $k$-subspaces of $k^m$.

\begin{notation}
When we write $S \in \Gr(r,m)$, we will always mean $S$ to be a $k$-rational point of $\Gr(r,m)$, that is an $r$-dimensional $k$-subspace of $k^m$.
\end{notation}

\begin{notation}
When we say $B \in k^{m \times r}$ is a $k$-basis for $S \in \Gr(r,m)$, we will mean that the columns of $B$ are a $k$-basis of $S$.
\end{notation}

\begin{notation}
For $\mathcal{A}$ and $\mathcal{B}$ two subsets of $[m]$, we will denote by $\sigma(\mathcal{A},\mathcal{B})$ the number of elements of the form $(a,b) \in \mathcal{A} \times \mathcal{B}$ with $a>b$.
\end{notation}

Let $S \in \Gr(r,m)$ and $B \in k^{m \times r}$ a $k$-basis for $S$. With $\psi$ a subset of $[m]$ of size $r$, we shall write $[\psi]_{S}^B$ for the $r$-minor of $B$ corresponding to row indices $\psi$. We will further write $([\psi]_S: \, \psi \subset [m], \, \#\psi = r)$ for the set of Pl{\"u}cker coordinates of $S$ that identify $S$ with a point of $\mathbb{P}^{{m \choose r} -1}$; this is the class in $\mathbb{P}^{{m \choose r} -1}$ of $([\psi]_S^B: \, \psi \subset [m], \, \#\psi = r)$ for any $k$-basis $B$ of $S$. Now let $T'=k \bigl[\,[\chi]: \, \chi \subset [m], \, \# \chi=m-r\,\bigr]$ be the polynomial ring over $k$ generated by variables $[\chi]$'s, where $\chi$ ranges across all subsets of $[m]$ of size $m-r$. The $k$-algebra isomorphism $T'\rightarrow T$ that takes $[\chi]$ to $\sigma([m] \setminus \chi,\chi)  \,  \big[[m] \setminus \chi \big]$, maps $\mathfrak{p}$ to the ideal $\mathfrak{p}'$ of Pl\"ucker relations in $T'$. It thus induces a canonical isomorphism of projective schemes $\Gr(r,m) \rightarrow \Gr(m-r,m)$. By working locally on the standard affine open sets of the Grassmannian, it can be seen that, on the level of $k$-rational points, $S \in \Gr(r,m)$ is mapped to its orthogonal complement $S^\perp \in \Gr(m-r,m)$; the latter being the set of all vectors $y=(y_i)_{i \in [m]}$ in $k^m$, such that for every vector $x=(x_i)_{i \in [m]} \in S$ we have $\sum_{i\in [m]} x_iy_i=0$. Hence, the Pl\"ucker coordinates of $S^\perp$ can be expressed in terms of the Pl\"ucker coordinates of $S$ as $[\chi]_{S^\perp} = \sigma([m] \setminus \chi,\chi) \big[[m] \setminus \chi \big]_S$.

\subsection{SLMF's and maximal minors} The combinatorial object of an $(r,m)$-SLMF has the following important algebraic characterization:

\begin{proposition} \label{prp:SLMF-algebraic}
Let $\Phi = \bigcup_{j \in [m-r]} \phi_j \times \{j\}$ be a subset of $[m] \times [m-r]$ with $\# \phi_j = r+1$ for every $j \in [m-r]$. We have that $\Phi$ is an $(r,m)$-SLMF if and only if all maximal minors of a generic matrix with support on $\Phi$ over $k$ are non-zero.
\end{proposition}

\subsection{SLMF's and local coordinates on $\Gr(r,m)$} \label{subsection:SLMFs} In the following discussion we assume that $r<m-1$; see also Remark \ref{rem:hyperplane}. Let $S \in \Gr(r,m)$ and let the matrix $A \in k^{m \times (m-r)}$ contain a basis of $S^\perp$ in its columns, expressed with respect to the standard basis of $k^m$. Let $\Phi = \bigcup_{j \in [m-r]} \phi_j \times \{j\}$ be an $(r,m)$-SLMF. For $j \in [m-r]$ denote by $H_j$ the $k$-subspace of $k^{m-r}$ spanned by the $m-r-1$ rows of $A$ indexed by $[m]\setminus \phi_j$. Consider the locus $V_\Phi \subset \Gr(r,m)$ consisting of those points $S \in \Gr(r,m)$ with the property that all $H_j$'s are codimension-$1$ hyperplanes of $k^{m-r}$ intersecting only at the origin, that is $\bigcap_{j \in [m-r]} H_j=0$; $V_\Phi$ is open, because it is open on each set of the standard affine open cover of $\Gr(r,m)$. 

For any $j, j' \in [m-r]$, let $\mathfrak{m}_{j j'}$ be the $(m-r-1)$-minor of $A$ corresponding to row indices $[m] \setminus \phi_j$ and column indices $[m-r] \setminus \{j'\}$; define a matrix $M \in k^{(m-r) \times (m-r)}$ as $M=(\mathfrak{m}_{j j'}: \, j,  j' \in [m-r])$. For any $j \in [m-r]$, $H_j$ has codimension $1$ if and only if not all $\mathfrak{m_{jj'}}$'s are zero as $j'$ varies in $[m-r]$. In that case, the vector $[\mathfrak{m}_{j1}, \,  -\mathfrak{m}_{j2}, \,   \mathfrak{m}_{j3}, \,  \cdots,\,  (-1)^{m-r-1}\mathfrak{m}_{j(m-r)}]$ is normal to the hyperplane $H_j$. Hence, a point $x=[x_1,\dots,x_{m-r}] \in k^{m-r}$ lies in $\bigcap_{j \in [m-r]} H_j$ if and only if the point $[x_1,-x_2,\dots,(-1)^{m-r-1}x_{m-r}]$ is orthogonal to all rows of $M$. We conclude that $S \in V_\Phi$ if and only if $\det(M) \neq 0$. 

Suppose $S \in V_\Phi$. Then the normal vectors to the $H_j$'s mentioned above form a $k$-basis of $k^{m-r}$; hence there is an automorphism of $k^{m-r}$ that takes $H_j$ to the hyperplane with normal vector the $j$th standard vector $e_j$ of $k^{m-r}$. Applying this change of basis to the matrix $A$, we see that $S^\perp$ can also be represented by some $B \in k^{m \times (m-r)}$, whose rows indexed by $[m]\setminus \phi_j$ are orthogonal to $e_j$; that is $B=(b_{ij})$ has support on $\Phi$. For every $j, j' \in [m-r]$ let $\mathfrak{n}_{j j'}$ be the $(m-r-1)$-minor of $B$ corresponding to row indices in $[m] \setminus \phi_j$ and column indices $[m-r]\setminus \{j'\}$. Since the $j$th column of $B$ has zeros at locations $[m]\setminus \phi_j$, we see that $\mathfrak{n}_{j j'}=0$ for every $j' \in [m-r]$ except possibly for $j'=j$; thus the matrix $N=(\mathfrak{n}_{j j'}: \, j,  j' \in [m-r])$ is diagonal. Moreover, $N=MC$, where $C$ is an invertible matrix; hence all $\mathfrak{n}_{jj}$'s are non-zero because $\det(M) \neq 0$. With $\phi_{ij}$ the $i$th element of $\phi_j$, we consider the $(m-r) \times (m-r)$ submatrix of $B$ corresponding to row indices $\chi=[m]\setminus ( \phi_j \setminus \{\phi_{ij}\})$; column $j$ of this submatrix has zeros everywhere, except possibly at the entry $b_{\phi_{ij}\, j}$. Thus the determinant of this submatrix is 
\begin{align*}
[\chi]_{S^\perp}^B = \sigma(\{\phi_{ij}\},\chi)(-1)^{j-1} b_{\phi_{ij} j} \mathfrak{n}_{jj}.
\end{align*}
\noindent Passing from the Pl{\"u}cker coordinates of $S^\perp$ to the Pl{\"u}cker coordinates of $S$, and since $B$ still stays a basis of $S^\perp$ after scaling each column by any non-zero element of $k$, we have that when $S \in V_\Phi$ then $S^\perp$ has a $k$-basis $B$ with $j$th column given by 
\begin{align}
b_{\phi_{ij} j} = (-1)^{i-1}[ \phi_j \setminus \{\phi_{ij}\}]_{S}^E  \, \, \, \text{for} \, \, \,  i \in [r+1] \, \, \, \, \, \, \text{and} \, \, \, \, \, \, b_{\ell j } = 0 \, \, \, \text{for} \, \, \,  \ell \not\in \phi_j, &\label{eq:Atilde}
\end{align} where $E \in k^{m \times r}$ is any basis of $S$.

Conversely, suppose that the matrix $B$ in \eqref{eq:Atilde} has rank $m-r$. Since the $j$th column of $B$ is orthogonal to $S$, we see that $B$ is a basis for $S^\perp$. Let us set $A=B$. Not all elements of the $j$th column of $A$ are zero, so say $[\phi_j \setminus \{\phi_{ij}\}]_{S}^E \neq 0$ for some $i$. In the notation above, this is up to sign equal to $a_{\phi_{ij}j}$ times $\mathfrak{m}_{jj}$; that is $\mathfrak{m}_{jj} \neq 0$. Also, the matrix $M$ is diagonal, from which we see that $S \in V_\Phi$. We thus have:

\begin{proposition} \label{prp:basis-Sperp}
$S \in V_\Phi$ if and only if the matrix $B$ of \eqref{eq:Atilde} has full rank. In this case $B$ is a basis for $S^\perp$.
\end{proposition}

\noindent Note that $V_\Phi$ is non-empty by Proposition \ref{prp:SLMF-algebraic}: if $B$ is a generic matrix over $k$ with support in $\Phi$, all its maximal minors are non-zero and so are all $\mathfrak{n}_{jj}$'s; it follows that the orthogonal complement of the column-space of $B$ lies in $V_\Phi$.

Next, consider the matrix 
$$\Pi_\Phi = \left((-1)^{\sigma(\{\alpha\},\phi_\beta)} [\phi_\beta \setminus  \{ \alpha \} ]:  \, \alpha \in [m] \setminus \phi_1, \, \beta \in [m-r]\setminus \{1\} \right) \in T^{(m-r-1)\times (m-r-1)},$$ 

\noindent with $[\phi_\beta \setminus  \{ \alpha \} ]$ set to zero whenever $\alpha \notin \phi_\beta$. Denote also $\Pi_{\Phi,S}^E$ the matrix $\Pi_\Phi$ after the substitution of each variable $[\psi] \in T$ by $[\psi]_S^E$. The determinant of $\Pi_{\Phi,S}^E$ is the $(m-r-1)$-minor of the matrix $B$ of \eqref{eq:Atilde} corresponding to row indices $[m] \setminus \phi_1$ and column indices $[m-r]\setminus \{1\}$. Suppose that $\det(\Pi_{\Phi,S}^E) \neq 0$; then columns $2,\dots,m-r$ of $B$ are part of a basis of $S^\perp$ that can be completed to a full basis of $S^\perp$, by adjoining a suitable vector which can be taken to have support on $\phi_1$. Necessarily, this vector is equal up to scale to the first column of $B$ in \eqref{eq:Atilde}, whence $B$ has full rank; we have:

\begin{proposition} \label{prp:p-Phi}
Let $\Phi$ be an $(r,m)$-SLMF. Then $S \in V_\Phi$ if and only if $S$ lies away from the hypersurface of $\Gr(r,m)$ defined by the polynomial $p_\Phi = \det(\Pi_\Phi).$
\end{proposition}

Finally, define a morphism $\gamma_{\phi_j}: V_\Phi  \rightarrow \mathbb {P}^{m-1}$ by taking $S \in V_\Phi$ with basis $B$ as in \eqref{eq:Atilde}, to the class in $\mathbb {P}^{m-1}$ of the $j$th column of $B$; define $\gamma_\Phi: V_\Phi \rightarrow \prod_{j \in [m-r]} \mathbb{P}^{m-1}$ by taking $S \in V_\Phi$ to $(\gamma_{\phi_j}(S): \, j \in [m-r])$. In view of Proposition \ref{prp:basis-Sperp}, the following is now clear:

\begin{proposition} \label{prp:SZ93}
With $\Phi$ an $(r,m)$-SLMF, $\gamma_\Phi$ is an open embedding, mapping an $r$-dimensional $k$-subspace $S \in V_\Phi$ to $m-r$ linearly independent directions in $S^\perp$.   
\end{proposition} 

\begin{remark} \label{rem:hyperplane}
When $r=m-1$, the open set $V_\Phi$ can be taken to be the entire space $\Gr(r,m)$ and the statement of Proposition \ref{prp:basis-Sperp} becomes the well-known formula for the normal vector to a codimension-$1$ hyperplane; the statement of Proposition \ref{prp:SZ93} becomes the standard assertion $\Gr(m-1,m) \cong \mathbb{P}^{m-1}$. 
\end{remark}

Example \ref{ex:SZ93} in section \ref{section:Examples} illustrates some of the above concepts.

\section{Proofs} \label{section:Proofs} 

In this section we provide the proofs to the main results of this paper stated in section \ref{section:Introduction}. Throughout, $\Omega$ is always a subset of $[m]\times [n]$ with the convention of Notation \ref{not:Omega}. For $j \in [n]$ let $\Omega_j$ be the set of all subsets of $\omega_j$ of cardinality $r+1$. We will also be using the notation of section \ref{section:SLMF}. 

\subsection{Basic facts} \label{subsection:basic-facts}

We begin with some preparations. For $\omega \subseteq [m]$ define the coordinate projection $\pi_\omega: k^m \rightarrow k^{\#\omega}$ by sending $(\xi_i)_{i \in [m]}$  to $(\xi_i)_{i \in \omega}$. For $B \in k^{m \times r}$ let $\pi_\omega(B) \in k^{\# \omega \times r}$ be the matrix obtained by applying $\pi_\omega$ to the columns of $B$.  

\begin{lemma} \label{lem:x-reconstruction}
Let $S \in \Gr(r,m)$, let $\omega \subseteq [m]$ with $\# \omega \ge r$, and suppose that $\dim_k \pi_{\omega}(S) = r$. Suppose that for some $x \in k^m$ we have $\pi_\omega(x) \in \pi_\omega(S)$. Then there exists unique $y \in S$ such that $\pi_\omega(y) = \pi_\omega(x)$.
\end{lemma}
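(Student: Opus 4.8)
The plan is essentially one line of linear algebra. First I would introduce the $k$-linear map $\rho \colon S \to k^{\#\omega}$ obtained by restricting $\pi_\omega$ to the subspace $S \subseteq k^m$; by the conventions of section~\ref{subsection:Conventions} a $k$-rational point of $\Gr(r,m)$ is an honest $r$-dimensional $k$-subspace of $k^m$, so $\dim_k S = r$ is available. By definition the image of $\rho$ is $\rho(S) = \pi_\omega(S)$, which by hypothesis has dimension $r$ over $k$. Since $\dim_k S = r$ as well, the rank--nullity theorem forces $\ker\rho = 0$, so $\rho$ is injective, and therefore it restricts to a $k$-linear isomorphism $S \to \pi_\omega(S)$.

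Next I would simply read off the conclusion: the hypothesis $\pi_\omega(x) \in \pi_\omega(S)$ says precisely that $\pi_\omega(x)$ lies in the image of $\rho$, so setting $y := \rho^{-1}\big(\pi_\omega(x)\big) \in S$ gives an element with $\pi_\omega(y) = \rho(y) = \pi_\omega(x)$. Uniqueness of such a $y$ is exactly the injectivity of $\rho$: if $y, y' \in S$ both map to $\pi_\omega(x)$, then $y - y' \in S \cap \ker\rho = 0$.

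There is no real obstacle here; the statement is elementary and its only role is to pin down, for later use, that once a subset $\omega$ of rows is large enough to see $S$ in full rank, a partial vector that is consistent with $S$ on $\omega$ extends uniquely to a vector of $S$. The one point worth stating carefully is that the hypothesis $\dim_k \pi_\omega(S) = r$ is what upgrades the tautological surjection $S \twoheadrightarrow \pi_\omega(S)$ to a bijection, which is where both existence and uniqueness come from.
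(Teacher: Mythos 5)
Your proof is correct and is essentially the paper's argument in coordinate-free form: the paper picks a basis $B$ of $S$, notes $\pi_\omega(B)$ is a basis of $\pi_\omega(S)$, and solves $\pi_\omega(x)=\pi_\omega(B)c$ uniquely, which is exactly your observation that $\dim_k\pi_\omega(S)=r=\dim_k S$ makes the restriction of $\pi_\omega$ to $S$ an isomorphism onto its image. No gap; the two write-ups differ only in matrix versus linear-map language.
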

\begin{proof}
Let $B \in k^{m \times r}$ be a basis for $S$. By hypothesis $\pi_\omega(B) \in k^{\#\omega \times r}$ is a basis of $\pi_\omega(S)$. Then there is a unique $c \in k^r$ such that $\pi_\omega(x) = \pi_\omega(B) c$. Define $y=Bc$, clearly $\pi_\omega(x) = \pi_\omega(y)$. Suppose $\pi_\omega(y) = \pi_\omega(y')$ for some other $y' \in S$. There is a unique $c' \in k^r$ such that $y' = Bc'$. On the other hand, the equation $\pi_\omega(y) = \pi_\omega(y')$ implies that $\pi_\omega(B) (c-c') = 0$. But $\pi_\omega(B)$ has rank $r$ and so $c = c'$. 
\end{proof}

\begin{lemma} \label{lem:x-reconstruction-union}
Let $S \in \Gr(r,m)$, $x \in k^m$ and $\omega, \omega' \subseteq [m]$ with $\pi_\omega(x) \in \pi_\omega(S)$ and $\pi_{\omega'}(x) \in \pi_{\omega'}(S)$. If $\dim_k \pi_{\omega \cap \omega'}(S) = r$ then $\pi_{\omega \cup \omega'}(x) \in \pi_{\omega \cup \omega'}(S)$.
\end{lemma}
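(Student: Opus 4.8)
The plan is to reduce everything to Lemma \ref{lem:x-reconstruction}, applied in effect three times: on $\omega$, on $\omega'$, and on $\omega \cap \omega'$.

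First I would record that the hypothesis $\dim_k \pi_{\omega \cap \omega'}(S) = r$ already forces $\dim_k \pi_\omega(S) = \dim_k \pi_{\omega'}(S) = r$. Indeed, the coordinate projection $\pi_{\omega \cap \omega'}$ on $k^m$ factors through $\pi_\omega$ (it is $\pi_\omega$ followed by the further coordinate restriction $k^{\#\omega} \to k^{\#(\omega \cap \omega')}$), so $r = \dim_k \pi_{\omega \cap \omega'}(S) \le \dim_k \pi_\omega(S) \le \dim_k S = r$, and symmetrically for $\omega'$. In particular, the restriction of $\pi_{\omega \cap \omega'}$ to $S$ is injective.

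Next, since $\pi_\omega(x) \in \pi_\omega(S)$ and $\dim_k \pi_\omega(S) = r$, Lemma \ref{lem:x-reconstruction} yields a unique $y \in S$ with $\pi_\omega(y) = \pi_\omega(x)$; symmetrically there is a unique $y' \in S$ with $\pi_{\omega'}(y') = \pi_{\omega'}(x)$. Restricting these two equalities to the coordinates indexed by $\omega \cap \omega'$ gives $\pi_{\omega \cap \omega'}(y) = \pi_{\omega \cap \omega'}(x) = \pi_{\omega \cap \omega'}(y')$. Since $y, y' \in S$ and $\pi_{\omega \cap \omega'}$ is injective on $S$ by the previous paragraph, we conclude $y = y'$. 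Finally, this common point $y = y' \in S$ satisfies $\pi_\omega(y) = \pi_\omega(x)$ and $\pi_{\omega'}(y) = \pi_{\omega'}(x)$, hence $y$ and $x$ agree on every coordinate index in $\omega \cup \omega'$; that is, $\pi_{\omega \cup \omega'}(x) = \pi_{\omega \cup \omega'}(y) \in \pi_{\omega \cup \omega'}(S)$, as desired.

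I do not expect a real obstacle here: the argument is a direct bootstrapping of Lemma \ref{lem:x-reconstruction}. The only point deserving a moment of care is the bookkeeping that the restriction of $y$ (resp. $y'$) to $\omega \cap \omega'$ is determined by its restriction to $\omega$ (resp. $\omega'$), so that the two ``partial reconstructions'' of $x$ can legitimately be compared on the overlap and glued.
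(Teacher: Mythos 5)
Your proof is correct and follows essentially the same route as the paper's: produce $y$ and $y'$ matching $x$ on $\omega$ and $\omega'$, use the rank-$r$ hypothesis on $\omega\cap\omega'$ to force $y=y'$, and glue on $\omega\cup\omega'$. The only cosmetic difference is that the paper gets $y=y'$ by citing the uniqueness statement of Lemma \ref{lem:x-reconstruction} on $\omega\cap\omega'$, while you argue injectivity of $\pi_{\omega\cap\omega'}|_S$ directly (and your preliminary observation that $\dim_k\pi_\omega(S)=r$ is harmless but not needed, since existence of $y,y'$ already follows from the membership hypotheses).
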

\begin{proof}
There exist $y, y' \in S$ such that $\pi_\omega(x) = \pi_\omega(y)$ and $\pi_{\omega'}(x) = \pi_{\omega'}(y')$. This implies that $\pi_{\omega \cap \omega'}(x) = \pi_{\omega \cap \omega'}(y) = \pi_{\omega \cap \omega'}(y')$. Lemma \ref{lem:x-reconstruction} gives $y = y'$. Now $y$ agrees with $x$ at each coordinate $i \in \omega \cup \omega'$, that is $\pi_{\omega \cup \omega'}(x) = \pi_{\omega \cup \omega'}(y) \in \pi_{\omega \cup \omega'}(S)$.
\end{proof}

\begin{lemma} \label{lem:sections}
Let $\phi = \{i_1<\dots<i_{r+1}\} \subseteq [m]$, let $x \in k^m$ and $S \in \Gr(r,m)$ with $\dim_k \pi_{\phi}(S)=r$. Then $\pi_\phi(x) \in \pi_\phi(S)$ if and only if $\sum_{\alpha \in [r+1]} \, (-1)^{\alpha+1} \, x_{i_\alpha} \, [\phi \setminus \{i_\alpha\}]_{S}=0$.
\end{lemma}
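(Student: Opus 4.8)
The plan is to realise $\pi_\phi(S)$ as a hyperplane in the $(r+1)$-dimensional coordinate space $k^{r+1}$ (with coordinates indexed by $\phi$) and to write down its defining linear form explicitly in terms of the maximal minors of a basis of $S$.

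First I would fix a basis $B \in k^{m \times r}$ of $S$. Since $\dim_k \pi_\phi(S) = r$, the matrix $\pi_\phi(B) \in k^{(r+1) \times r}$ has rank $r$, and its columns span $\pi_\phi(S) \subseteq k^{r+1}$. Hence, for a vector $v = (v_{i_1},\dots,v_{i_{r+1}})$, one has $v \in \pi_\phi(S)$ if and only if the $(r+1)\times(r+1)$ matrix $[\,v \mid \pi_\phi(B)\,]$ obtained by prepending $v$ as a new first column to $\pi_\phi(B)$ is singular, i.e. $\det [\,v \mid \pi_\phi(B)\,] = 0$. Here the rank hypothesis on $\pi_\phi(B)$ is exactly what makes this an equivalence and not merely a one-sided implication; without it the determinant would vanish identically while $\pi_\phi(S)$ would be a proper subspace of the would-be hyperplane.

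Next I would expand this determinant by cofactors along the prepended column $v$. The $(\alpha,1)$-cofactor is $(-1)^{\alpha+1} = (-1)^{\alpha-1}$ times the determinant of the $r \times r$ matrix obtained from $\pi_\phi(B)$ by deleting its $\alpha$-th row; since the rows of $\pi_\phi(B)$ are indexed by $i_1 < \cdots < i_{r+1}$, deleting the $\alpha$-th one leaves exactly the submatrix of $B$ on rows $\phi \setminus \{i_\alpha\}$, whose determinant is the maximal minor $[\phi \setminus \{i_\alpha\}]_S^B$. Thus $\det [\,v \mid \pi_\phi(B)\,] = \sum_{\alpha \in [r+1]} (-1)^{\alpha-1} v_{i_\alpha}\, [\phi \setminus \{i_\alpha\}]_S^B$. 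Specialising $v = \pi_\phi(x)$, i.e. $v_{i_\alpha} = x_{i_\alpha}$, yields the asserted criterion, with $[\,\cdot\,]_S^B$ in place of $[\,\cdot\,]_S$.

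Finally I would observe that the displayed expression $\sum_{\alpha} (-1)^{\alpha-1} x_{i_\alpha} [\phi\setminus\{i_\alpha\}]_S$ is homogeneous of degree one in the Pl\"ucker coordinates, so changing the basis $B$ multiplies every minor appearing in it by one common nonzero scalar and hence does not affect whether it vanishes; this legitimises writing $[\,\cdot\,]_S$ rather than $[\,\cdot\,]_S^B$ in the statement. I do not expect a genuine obstacle here: the only points needing care are the bookkeeping of the cofactor signs and the remark that $\dim_k \pi_\phi(S) = r$ is precisely the hypothesis that upgrades the singularity criterion to an equivalence.
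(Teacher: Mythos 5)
Your proof is correct and follows essentially the same route as the paper: form the $(r+1)\times(r+1)$ matrix $[\pi_\phi(x)\ \pi_\phi(B)]$, use the rank-$r$ hypothesis on $\pi_\phi(B)$ to equate its singularity with $\pi_\phi(x)\in\pi_\phi(S)$, and Laplace-expand along the first column to obtain the stated linear relation in the minors $[\phi\setminus\{i_\alpha\}]_S^B$. Your closing remark on basis-independence is a harmless addition that the paper absorbs into its conventions on Pl\"ucker coordinates.
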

\begin{proof}
With $B \in k^{m \times r}$ a basis for $S$, we use $[\phi \setminus \{i_\alpha\}]_S^B$ to identify $\det \big(\pi_{\phi \setminus \{i_\alpha\}}(B) \big)$ for every $\alpha \in [r+1]$ (see section \ref{subsection:Conventions}). Applying Laplace expansion on the first column of the matrix $[\pi_\phi(x) \, \, \pi_\phi(B)] \in k^{(r+1) \times (r+1)}$, shows that $\det \big([\pi_\phi(x) \, \, \pi_\phi(B)]\big)=0$ is equivalent to the formula in the statement. Since $\pi_{\phi}(B)$ has rank $r$ by hypothesis, $\det \big([\pi_\phi(x) \, \, \pi_\phi(B)]\big)=0$ is equivalent to $\pi_\phi(x) \in \pi_\phi(S)$.
\end{proof}

\begin{lemma} \label{lem:NonDimensionDrop}
With $\Phi=\bigcup_{j \in [m-r]} \phi_j \times \{j\}$ an $(r,m)$-SLMF and $S \in V_\Phi$, we have that $\dim_k \pi_{\phi_j}(S) = r$ for every $j \in [m-r]$.
\end{lemma}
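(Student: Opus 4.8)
The plan is to translate the statement into a question about $S^\perp$ and to exploit the fact that $S\in V_\Phi$ forces each $H_j$ to be a genuine hyperplane. Fix a matrix $A\in k^{m\times(m-r)}$ whose columns form a basis of $S^\perp$ with respect to the standard basis of $k^m$; since these columns are linearly independent, the linear map $A\colon k^{m-r}\to k^m$ is injective. The heart of the argument is the identification, for each $j\in[m-r]$, of the orthogonal complement of $\pi_{\phi_j}(S)$ inside $k^{\#\phi_j}$ with the space of vectors of $S^\perp$ vanishing outside $\phi_j$: a vector $w\in k^{\#\phi_j}$ is orthogonal to $\pi_{\phi_j}(s)$ for every $s\in S$ precisely when the vector $\bar w\in k^m$ obtained from $w$ by filling in zeros in the positions of $[m]\setminus\phi_j$ satisfies $\bar w\cdot s=0$ for all $s\in S$, i.e.\ $\bar w\in S^\perp$; and $w\mapsto\bar w$ is an isomorphism onto $\{v\in S^\perp: v_i=0 \text{ for } i\notin\phi_j\}$.

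Next I would compute the dimension of that last space. Writing a vector of $S^\perp$ as $Ac$ for $c\in k^{m-r}$, it vanishes outside $\phi_j$ exactly when $(Ac)_i=0$ for all $i\in[m]\setminus\phi_j$, that is, when $c$ is orthogonal to every row of $A$ indexed by $[m]\setminus\phi_j$, i.e.\ $c\in H_j^\perp$. Since $A$ is injective, the space of such $Ac$ has dimension $\dim_k H_j^\perp$. Now the hypothesis $S\in V_\Phi$ guarantees in particular that $H_j$ is a codimension-$1$ hyperplane of $k^{m-r}$, so $\dim_k H_j^\perp=1$. Combining this with the identification of the previous paragraph gives $\dim_k\pi_{\phi_j}(S)=\#\phi_j-\dim_k\pi_{\phi_j}(S)^\perp=(r+1)-1=r$, as desired.

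I do not expect a genuine obstacle; the only thing to be careful about is the bookkeeping with orthogonal complements taken in two different ambient spaces ($k^{\#\phi_j}$ and $k^{m-r}$) and with keeping the extension-by-zeros map straight. It is worth pointing out that the argument uses only that each $H_j$ has codimension $1$, and neither that $\bigcap_j H_j=0$ nor that $\Phi$ is an SLMF; the SLMF hypothesis in the statement serves only to ensure (via Proposition \ref{prp:SZ93}) that $V_\Phi$ is nonempty, so that the lemma is not vacuous. An alternative, essentially equivalent route is to invoke Proposition \ref{prp:SZ93} directly: for $S\in V_\Phi$ the morphism $\gamma_{\phi_j}$ is defined at $S$, so the Pl\"ucker coordinates $[\phi_j\setminus\{\phi_{ij}\}]_S$, $i\in[r+1]$, are not all zero; as these are, up to sign, exactly the maximal minors of the $(r+1)\times r$ matrix $\pi_{\phi_j}(B)$ for a $k$-basis $B$ of $S$, that matrix has rank $r$, which says precisely that $\dim_k\pi_{\phi_j}(S)=r$.
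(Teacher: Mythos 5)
Your main argument is correct, and it takes a genuinely different route from the paper's. The paper deduces the lemma from Proposition \ref{prp:SZ93}: since $S\in V_\Phi$, the sparse matrix $B$ of \eqref{eq:Atilde} representing $S^\perp$ has full column rank, and if all Pl\"ucker coordinates $[\phi_j\setminus\{\phi_{ij}\}]_S$ vanished then the $j$th column of that matrix would be zero; hence they do not all vanish, which is exactly $\dim_k\pi_{\phi_j}(S)=r$. You instead argue directly from the definition of $V_\Phi$: the duality identification of $\pi_{\phi_j}(S)^\perp\subseteq k^{\#\phi_j}$ with the vectors of $S^\perp$ supported on $\phi_j$, i.e.\ with $A(H_j^\perp)$, combined with the codimension-$1$ condition on $H_j$, gives $\dim_k\pi_{\phi_j}(S)^\perp=1$ and hence $\dim_k\pi_{\phi_j}(S)=(r+1)-1=r$; the dimension count $\dim_k W+\dim_k W^\perp=N$ is legitimate over any field because the standard bilinear form is nondegenerate, even though it need not be anisotropic. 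Your route is more elementary and slightly more general: it bypasses Proposition \ref{prp:SZ93} and the SLMF hypothesis entirely, using only that each $H_j$ has codimension $1$, as you correctly observe (the SLMF condition enters only to guarantee $V_\Phi\neq\emptyset$). What the paper's argument buys is brevity, since Proposition \ref{prp:SZ93} is already in place. Your closing alternative (nonvanishing of the coordinates of $\gamma_{\phi_j}(S)$, identified up to sign with the maximal minors of $\pi_{\phi_j}(B)$ for $B$ a $k$-basis of $S$) is essentially the paper's proof in light disguise; note only that the nonvanishing is what the paper extracts from the full column rank of the matrix in \eqref{eq:Atilde}, rather than from the bare assertion that $\gamma_{\phi_j}$ is a morphism on $V_\Phi$, so that phrasing should be backed by the same rank argument to avoid circularity.
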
 
\begin{proof}
Proposition \ref{prp:basis-Sperp} asserts that the matrix $B$ of \eqref{eq:Atilde} is a basis of $S^\perp$. On the other hand, $\dim_k \pi_{\phi_j}(S)<r$ if and only if all Pl{\"u}cker coordinates $[\phi_j \setminus \{\phi_{i j}\}]_S$ are zero, where $\phi_{ij}$ denotes the $i$th element of $\phi_j$. But in that case the $j$th column of $B$ would be zero, a contradiction.
\end{proof}

\begin{lemma} \label{lem:PointProjection}
With $\Phi=\bigcup_{j \in [m-r]} \phi_j \times \{j\}$ an $(r,m)$-SLMF and $S \in V_\Phi$, if $\pi_{\phi_j}(x) \in \pi_{\phi_j}(S)$ for every $j \in [m-r]$, then $x \in S$. 
\end{lemma}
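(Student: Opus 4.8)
The plan is to translate the $m-r$ scalar hypotheses $\pi_{\phi_j}(x)\in\pi_{\phi_j}(S)$ into the single linear-algebra assertion that $x$ is orthogonal to $S^\perp$, and then to conclude via the identity $(S^\perp)^\perp=S$. First I would fix the matrix $B\in k^{m\times(m-r)}$ representing $S^\perp$ with support on $\Phi$, whose $j$th column $b_j$ is given by formula \eqref{eq:Atilde}. Since $S\in V_\Phi$, the computation in section \ref{section:SLMF} shows $\det N=\prod_{j\in[m-r]}\mathfrak{n}_{jj}\neq 0$, so in particular each $\mathfrak{n}_{jj}\neq 0$; and by Proposition \ref{prp:SZ93} (used already in the proof of Lemma \ref{lem:NonDimensionDrop}) the columns $b_1,\dots,b_{m-r}$ are linearly independent, hence form a basis of $S^\perp$.

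Next I would bring in Lemma \ref{lem:NonDimensionDrop}, which gives $\dim_k\pi_{\phi_j}(S)=r$ for every $j$; this makes Lemma \ref{lem:sections} applicable, so that $\pi_{\phi_j}(x)\in\pi_{\phi_j}(S)$ holds if and only if $\sum_{i\in[r+1]}(-1)^{i-1}x_{\phi_{ij}}\,[\phi_j\setminus\{\phi_{ij}\}]_S=0$. This equation is homogeneous in the Pl\"ucker coordinates of $S$, so it may be evaluated with any fixed choice of representatives; using the representative appearing in \eqref{eq:Atilde} together with the fact that $b_{\ell j}=0$ for $\ell\notin\phi_j$, the left-hand side is exactly $\mathfrak{n}_{jj}\langle b_j,x\rangle$, where $\langle\cdot,\cdot\rangle$ denotes the standard bilinear form on $k^m$. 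Since $\mathfrak{n}_{jj}\neq 0$, the hypothesis $\pi_{\phi_j}(x)\in\pi_{\phi_j}(S)$ is therefore equivalent to $\langle b_j,x\rangle=0$.

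Running this over all $j\in[m-r]$, the hypotheses of the lemma say precisely that $x$ is orthogonal to $\Span\{b_1,\dots,b_{m-r}\}=S^\perp$. As the standard bilinear form on $k^m$ is non-degenerate over any field, $(S^\perp)^\perp=S$, and hence $x\in S$, as claimed.

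I do not anticipate a genuine obstacle. The only point needing a bit of care is the bookkeeping of signs and of the fact that Pl\"ucker coordinates are only defined up to a common scalar, which is harmless here because the defining equation from Lemma \ref{lem:sections} is homogeneous in them and the sign conventions are already fixed in \eqref{eq:Atilde} and section \ref{subsection:Conventions}. For completeness I record an alternative, more pedestrian route: write $[m]$ as an increasing union of the sets $\phi_j$ and apply Lemmas \ref{lem:x-reconstruction} and \ref{lem:x-reconstruction-union} repeatedly, invoking the SLMF inequality $\#\bigcup_{j\in\J}\phi_j\ge\#\J+r$ to ensure the relevant partial intersections have full projected dimension; this works but is more laborious, so I would prefer the orthogonality argument above.
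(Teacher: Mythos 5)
Your argument is correct and is essentially the paper's own proof: both combine Lemma \ref{lem:NonDimensionDrop} with Lemma \ref{lem:sections} to reinterpret each hypothesis $\pi_{\phi_j}(x)\in\pi_{\phi_j}(S)$ as orthogonality of $x$ to the $j$th column of the matrix $B$ from \eqref{eq:Atilde}, and then conclude $x\in S$ since those columns form a basis of $S^\perp$. You merely spell out the scaling factor $\mathfrak{n}_{jj}$ and the basis claim in more detail than the paper does, which is fine.
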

\begin{proof}
By Lemma \ref{lem:NonDimensionDrop}, $\dim_k \pi_{\phi_j}(S) = r$ for every $j \in [m-r]$. Then Lemma \ref{lem:sections} implies that the relation $\pi_{\phi_j}(x) \in \pi_{\phi_j}(S)$ is equivalent to $x$ being orthogonal to the $j$th column of $B$, where $B$ is the matrix given in \eqref{eq:Atilde}. Since this is true for every $j \in [m-r]$ and since by Proposition \ref{prp:basis-Sperp} the columns of $B$ form a basis for $S^\perp$, certainly $x \in S$. 
\end{proof}

\begin{lemma} \label{lem:k->K}
Let $k \hookrightarrow K$ be a field extension. Then the algebraic matroid of $k[Z]/I_{r+1}(Z)$ coincides with the algebraic matroid of $k[Z]/I_{r+1}(Z) \otimes_k K$. 
\end{lemma}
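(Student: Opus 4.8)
The plan is to reduce the assertion to the ideal-theoretic description of independence recalled in Section~\ref{subsection:TheAlgebraicMatroid}. Write $A=k[Z]/I_{r+1}(Z)$ and $A_K=A\otimes_k K$. First I would record that $A_K$ is again a domain, so that its algebraic matroid is actually defined: tensoring the short exact sequence $0\to I_{r+1}(Z)\to k[Z]\to A\to 0$ with the $k$-flat (indeed free) module $K$ identifies $A_K$ with $K[Z]/I_{r+1}(Z)K[Z]$, and this is a domain because $I_{r+1}(Z)K[Z]$ is the ideal of $(r+1)$-minors over the field $K$, hence prime by \cite{BrunsVetter:1988}. Under this identification the image of $z_{ij}$ in $A_K$ is $z_{ij}+I_{r+1}(Z)K[Z]$, so, exactly as for $A$, a set $\Omega\subset[m]\times[n]$ is independent in the matroid of $A_K$ if and only if $K[Z_\Omega]\cap I_{r+1}(Z)K[Z]=0$, where $K[Z_\Omega]$ denotes the subring generated by the $z_{ij}$ with $(i,j)\in\Omega$. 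Thus it would suffice to prove, for every $\Omega$,
\[
K[Z_\Omega]\cap I_{r+1}(Z)K[Z]=0 \quad\Longleftrightarrow\quad k[Z_\Omega]\cap I_{r+1}(Z)=0.
\]

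For this I would fix a $k$-basis $\{e_\lambda\}_{\lambda\in\Lambda}$ of $K$ with $1\in\{e_\lambda\}$. Since $K=\bigoplus_\lambda k e_\lambda$, base change yields internal direct-sum decompositions of $k$-vector spaces $K[Z]=\bigoplus_\lambda e_\lambda k[Z]$ and $K[Z_\Omega]=\bigoplus_\lambda e_\lambda k[Z_\Omega]$, and --- because the $(r+1)$-minors have coefficients in $k$ --- also $I_{r+1}(Z)K[Z]=\bigoplus_\lambda e_\lambda I_{r+1}(Z)$, all sitting compatibly inside $K[Z]=\bigoplus_\lambda e_\lambda k[Z]$. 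Then an element $\sum_\lambda e_\lambda f_\lambda$ with $f_\lambda\in k[Z]$ (almost all zero) lies in $K[Z_\Omega]$ iff every $f_\lambda\in k[Z_\Omega]$, and lies in $I_{r+1}(Z)K[Z]$ iff every $f_\lambda\in I_{r+1}(Z)$; hence
\[
K[Z_\Omega]\cap I_{r+1}(Z)K[Z]=\bigoplus_{\lambda}e_\lambda\bigl(k[Z_\Omega]\cap I_{r+1}(Z)\bigr),
\]
and in particular the left-hand side vanishes precisely when $k[Z_\Omega]\cap I_{r+1}(Z)=0$. This gives the displayed equivalence, so the two matroids have the same independent sets and therefore coincide.

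I do not expect a genuine obstacle here: the argument is just flatness of a field extension plus the observation that one may intersect $k$-subspaces that are ``graded'' by a fixed $k$-basis of $K$ component by component. The only point deserving attention is that the statement is meaningful at all, i.e. that $A_K$ is a domain; this is why one invokes the (classical) stability of determinantal ideals under base change rather than merely primeness of $I_{r+1}(Z)$ over $k$. Working with a $k$-basis of $K$, rather than e.g. a primitive element, is also what lets the argument cover arbitrary --- possibly infinite and non-algebraic --- extensions. Equivalently, in transcendence-degree language: $\operatorname{Frac}(A)\hookrightarrow\operatorname{Frac}(A_K)$ and $K\subseteq\operatorname{Frac}(A_K)$, any polynomial relation over $k$ among the residues of the $z_{ij}$ with $(i,j)\in\Omega$ is a fortiori one over $K$, while the $\{e_\lambda\}$-decomposition shows that a relation over $K$ forces one over $k$; so these residues are algebraically independent over $k$ if and only if they are algebraically independent over $K$, which is again the equality of the two matroids.
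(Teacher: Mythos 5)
Your proof is correct and follows essentially the same route as the paper: reduce independence of $\Omega$ to the vanishing of $k[Z_\Omega]\cap I_{r+1}(Z)$ (equivalently, injectivity of $k[Z_\Omega]\to k[Z]/I_{r+1}(Z)$) and show this is preserved and reflected by the base change $k\to K$. Where the paper simply invokes faithful flatness of $K$ over $k$ in one line, you prove the same base-change equivalence by hand via a $k$-basis decomposition of $K$ (and also record that $K[Z]/I_{r+1}(Z)K[Z]$ is a domain), which is a harmless elementary unpacking of the same argument.
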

\begin{proof}
An $\Omega \subset [m] \times [n]$ is an independent set in the algebraic matroid of $M(r, m\times n)$ if and only if the ring homomorphism $k[Z_\Omega] \rightarrow k[Z]/I_{r+1}(Z)$ is injective. Since $K$ is a faithfully flat $k$-module, this is equivalent to the injectivity of $K[Z_\Omega] \rightarrow k[Z]/I_{r+1}(Z) \otimes_k K$.
\end{proof}

\begin{lemma} \label{lem:omega-r}
Suppose that $\# \omega_t = r$ for some $t \in [n]$, and set $\Omega' = \bigcup_{j \in [n-1]} \omega_j' \times \{j\} \subset [m] \times [n-1]$, where $\omega_j'=\omega_j$ for $j<t$ and $\omega_j' = \omega_{j+1}$ for $t \le j <n$. Then $\Omega$ is a base of the algebraic matroid of $\M(r,m \times n)$ if and only if $\Omega'$ is a base of the algebraic matroid of $\M(r,m \times (n-1))$.
\end{lemma}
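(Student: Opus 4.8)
The plan is to reduce to a statement about dominance of projection morphisms and then compare $\pi_\Omega$ with $\pi_{\Omega'}$ via the morphism that deletes column $t$. First the size bookkeeping: a base of a matroid has cardinality equal to the matroid rank, so if $\#\Omega\neq r(m+n-r)$ then $\Omega$ is not a base, and then $\#\Omega'=\#\Omega-\#\omega_t=\#\Omega-r\neq r\big(m+(n-1)-r\big)=\dim\M(r,m\times(n-1))$, so $\Omega'$ is not a base either and the equivalence holds trivially. Hence I may assume $\#\Omega=r(m+n-r)$, so that $\#\Omega'=r\big(m+(n-1)-r\big)=\dim\M(r,m\times(n-1))$. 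By Lemma~\ref{lem:k->K} I may also assume $k$ algebraically closed and argue with matrices over $k$; what remains is to show $\pi_\Omega$ dominant $\iff\pi_{\Omega'}$ dominant. Since $\#\omega_t=r$, the entries of $\Omega$ in columns other than $t$ are exactly those of $\Omega'$ (after reindexing the columns past $t$), giving an identification $\mathbb{A}^\Omega\cong\mathbb{A}^{\Omega'}\times k^r$ under which $\pi_\Omega(X)=\big(\pi_{\Omega'}(X^{(t)}),\,\pi_{\omega_t}(x_t)\big)$, where $X^{(t)}$ is $X$ with column $t$ deleted and $x_t$ is the $t$th column. I will also take $r<\min\{m,n\}$, the remaining degenerate ranges being disposed of directly: there $\M(r,m\times n)$ or $\M(r,m\times(n-1))$ is an affine space and the claim is immediate, or the hypotheses are incompatible with $\#\Omega=\dim\M$ and there is nothing to prove.

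For the implication $\pi_\Omega$ dominant $\Rightarrow\pi_{\Omega'}$ dominant, let $q\colon\M(r,m\times n)\to\M(r,m\times(n-1))$ be the morphism deleting column $t$; it is well defined because the defining minors of the target pull back into $I_{r+1}(Z)$. By the displayed formula, $\mathrm{pr}_1\circ\pi_\Omega=\pi_{\Omega'}\circ q$ with $\mathrm{pr}_1$ the coordinate projection. If $\pi_\Omega$ is dominant then so is $\mathrm{pr}_1\circ\pi_\Omega$ (post-composing with the surjection $\mathrm{pr}_1$), hence so is $\pi_{\Omega'}\circ q$, and a fortiori $\pi_{\Omega'}$, since $\operatorname{im}\pi_{\Omega'}\supseteq\operatorname{im}(\pi_{\Omega'}\circ q)$.

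The converse is the substance. Assume $\pi_{\Omega'}$ is dominant. Let $W\subseteq\M(r,m\times(n-1))$ be the locus of matrices $X'$ of rank exactly $r$ whose column space $S(X')\in\Gr(r,m)$ satisfies $\dim_k\pi_{\omega_t}(S(X'))=r$, i.e.\ $\pi_{\omega_t}$ restricts to an isomorphism $S(X')\to k^r$. This is an open subset: ``rank exactly $r$'' is open, and on that locus $S(X')$ varies algebraically and the condition is the nonvanishing of the Pl\"ucker coordinate $[\omega_t]$ of a local basis of the column space. It is nonempty: for $B\in k^{m\times r}$ with $\pi_{\omega_t}(B)$ invertible and $C\in k^{r\times(n-1)}$ of rank $r$ (possible since $n-1\ge r$), the matrix $BC$ lies in $W$. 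So $W$ is dense in the irreducible variety $\M(r,m\times(n-1))$, whence $\pi_{\Omega'}|_W$ is dominant, and by Chevalley's theorem \cite{hartshorne1977algebraic} $\pi_{\Omega'}(W)$ contains a dense open $V\subseteq\mathbb{A}^{\Omega'}$. Now fix $(a,b)\in V\times k^r$ and pick $X'\in W$ with $\pi_{\Omega'}(X')=a$. Since $\pi_{\omega_t}\colon S(X')\to k^r$ is an isomorphism there is $v\in S(X')$ with $\pi_{\omega_t}(v)=b$ (unique, by Lemma~\ref{lem:x-reconstruction}). Insert $v$ as the $t$th column of $X'$ to obtain $X$; all columns of $X$ lie in $S(X')$, so $\operatorname{rank}X\le r$ and $X\in\M(r,m\times n)$, while $\pi_\Omega(X)=(a,b)$ by construction. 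Thus $\operatorname{im}\pi_\Omega\supseteq V\times k^r$, dense in $\mathbb{A}^{\Omega'}\times k^r=\mathbb{A}^\Omega$, so $\pi_\Omega$ is dominant, as required.

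The only genuine obstacle I anticipate is this converse: one must be able to choose the preimage $X'$ of a generic $a$ so that its column space still surjects onto the $\omega_t$-coordinates, which is precisely the mechanism by which a column with exactly $r$ observed entries contributes a free factor $k^r$ to the image, and is where $\#\omega_t=r$ is used decisively (for $\#\omega_t>r$ no such freedom exists, and for $\#\omega_t<r$ the cardinality count of the first paragraph already fails). A minor secondary point is phrasing the density step ``$\pi_{\Omega'}(W)$ contains a dense open'' cleanly from constructibility of images of dominant morphisms; everything else is routine bookkeeping with the identification $\mathbb{A}^\Omega\cong\mathbb{A}^{\Omega'}\times k^r$.
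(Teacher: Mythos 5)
Your proof is correct, but it takes a genuinely different route from the paper. The paper argues algebraically and never leaves the ring: after permuting so that $t=1$ and $\omega_1=\{m-r+1,\dots,m\}$, it uses a diagonal lexicographic order under which the $(r+1)$-minors of $Z$ are a Gr\"obner basis, elimination to see that $k[z_{\omega_1},Z']\cap I_{r+1}(Z)\subset I_{r+1}(Z')$, and the freeness of $k[Z]/I_{r+1}(Z')$ over $k[Z']/I'_{r+1}(Z')$ to conclude that $k[Z_\Omega]\cap I_{r+1}(Z)\neq 0$ if and only if $k[Z'_{\Omega'}]\cap I'_{r+1}(Z')\neq 0$; this gives the stronger statement that \emph{independence} of $\Omega$ and $\Omega'$ are equivalent (the base statement then follows from the cardinality bookkeeping), it works over any infinite $k$ directly, and it needs no passage to closed points. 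You instead reduce base-ness to dominance of $\pi_\Omega$, pass to $\overline{k}$ via Lemma~\ref{lem:k->K}, split $\mathbb{A}^\Omega\cong\mathbb{A}^{\Omega'}\times k^r$, and in the key direction complete a generic $\Omega'$-data point by inserting into column $t$ the unique vector of the column space with prescribed $\omega_t$-coordinates, using Chevalley's theorem for the density bookkeeping; this is a transparent geometric explanation of why a column with exactly $r$ observed entries contributes a free factor $k^r$, at the cost of constructibility/density arguments and of proving only the base-level (not independence-level) equivalence, which is all the lemma claims and all the paper uses. Your treatment of the degenerate ranges is glib (for $r=n\le m$ the statement as literally written can fail for cardinality reasons), but the paper's own proof equally presupposes $r<\min\{m,n\}$ (it needs the $(r+1)$-minors to exist), so this is not a gap relative to the intended setting.
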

\begin{proof}

For any invertible matrices $P_1 \in k^{m\times m}$ and $P_2 \in k^{n \times n}$ we have that 
$ I_{r+1}(Z) =  I_{r+1}(P_1 Z P_2)$ \cite{BrunsVetter:1988}. Taking $P_1,P_2$ to be permutations, we may assume that $t=1$ and $\omega_1 = \{m-r+1,\dots,m\}$. 

We write $Z = [z \, \, \, Z']$ where $z$ is the first column of $Z$. Recall that $Z_\Omega=\{z_{ij}:  \, (i,j) \in \Omega\}$. Set $z_{\omega_1} = \{z_{(m-r+1)1},\dots,z_{m1} \}$ and $Z'_\Omega=\{z_{ij}:  \, (i,j) \in \Omega, \, j>1\}$. We denote by $k[z]$, $k[Z']$, $k[z_{\omega_1},Z'], \, k[Z_\Omega]$ and $k[Z'_{\Omega}]$ the polynomial rings over $k$ generated by the corresponding variables.

We consider the lexicographic term order on $k[Z]$ with $$z_{11}> z_{21} > \cdots > z_{m1} > z_{12}> z_{22}> \cdots > z_{m2} > z_{13}>z_{23}> \cdots >z_{m-1,n}> z_{mn}.$$ 

\noindent This is a so-called diagonal term order, in the sense that the leading term of every minor of the matrix $Z$ is the product of the variables on the main diagonal of the minor. With respect to this order $>$, it is well-known that the $(r+1)$-minors of $Z$ form a Gr{\"o}bner basis of the ideal $ I_{r+1}(Z)$ \cite{bruns2022determinants}. By elimination theory (e.g., Section 15.10.4 in \cite{Eisenbud-2004} or Chapter 3 in \cite{cox2015ideals}), a Gr{\"o}bner basis of the ideal $k[z_{\omega_1}, Z'] \cap  I_{r+1}(Z)$ with respect to the restriction of $>$ on $k[z_{\omega_1}, Z']$ is given by the elements of the above Gr{\"o}bner basis of $ I_{r+1}(Z)$ that live in $k[z_{\omega_1}, Z']$. These are the $(r+1)$-minors of $Z'$, and so we have the inclusion of sets $$k[Z_\Omega] \cap  I_{r+1}(Z) \subset k[z_{\omega_1}, Z'] \cap  I_{r+1}(Z) \subset  I_{r+1}(Z').$$ 

Now, take a polynomial $p \in k[Z_\Omega] \cap  I_{r+1}(Z)$. By the above inclusion $p \in   I_{r+1}(Z')$, that is the class $\overline{p}$ of $p$ in $k[Z] /  I_{r+1}(Z')$ is zero. Denote by $ I'_{r+1}(Z')$ the ideal of $k[Z']$ generated by the $r+1$ minors of $Z'$. Then $k[Z] /  I_{r+1}(Z') \cong  k[Z'] /  I'_{r+1}(Z') \otimes_k k[z]$. Thus $k[Z] /  I_{r+1}(Z')$ is a free $k[Z'] /  I'_{r+1}(Z')$-module with basis all the monomials $z^\alpha = z_{11}^{\alpha_1}\cdots z_{m1}^{\alpha_m}$ in the variables $z$, where $\alpha = (\alpha_1,\dots,\alpha_m)$ is a multi-exponent. Let us write $p = \sum_{\alpha} c_\alpha \, z^\alpha$, where the $c_\alpha$'s are polynomials in $k[Z']$. Let us also write $\overline{p} = \sum_{\alpha} \overline{c}_\alpha \, z^\alpha$, where $\overline{c}_\alpha$ is the class of $c_{\alpha}$ in $k[Z'] /  I'_{r+1}(Z')$. Since $\overline{p}=0$ in $k[Z] /  I_{r+1}(Z')$, the freeness of $k[Z] /  I_{r+1}(Z')$ over $k[Z'] /  I'_{r+1}(Z')$ gives that $\overline{c}_\alpha = 0$ for every $\alpha$ that appears in the summation. Equivalently, $c_\alpha \in  I'_{r+1}(Z')$ for every $\alpha$. Since $p \in k[Z_\Omega]$ we have that $c_\alpha \in k[Z_\Omega'] \cap  I'_{r+1}(Z')$. Now, $p$ is non-zero if and only if not all $c_\alpha$'s are zero. That is, $k[Z_\Omega] \cap  I_{r+1}(Z)$ is non-zero if and only if $k[Z_\Omega'] \cap  I'_{r+1}(Z')$ is non-zero. Equivalently, the variables $Z_\Omega$ are algebraically independent mod $ I_{r+1}(Z)$ if and only if the variables $Z'_\Omega$ are algebraically independent mod $ I'_{r+1}(Z')$, which is the statement of the lemma. 
\end{proof}

\begin{lemma} \label{lem:SLMF-I}
Let $\Phi =\bigcup_{j \in [m-r]} \phi_j \times \{j\}$ be a subset of $[m] \times [m-r]$ with $\# \phi_j=r+1$ for every $j \in [m-r]$. Then $\Phi$ is an $(r,m)$-SLMF if and only if it is a relaxed $(1,r,m)$-SLMF.
\end{lemma}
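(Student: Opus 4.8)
The plan is to notice that the hypothesis $\#\phi_j = r+1$ collapses the weighted sum in Definition \ref{dfn:relaxedSLMF} into a count of certain indices, so that both properties become manifestly the same Hall-type inequality read in two dual ways. Concretely, the first thing I would record is the elementary identity: for every $\I \subseteq [m]$ and every $j \in [m-r]$, since $0 \le \#(\phi_j \cap \I) \le r+1$, the term $\max\{\#(\phi_j \cap \I) - r, 0\}$ equals $1$ if $\phi_j \subseteq \I$ and $0$ otherwise. Applying Definition \ref{dfn:relaxedSLMF} with $\Omega_\J$ taken to be $\Phi$ itself (so the index set ``$\J$'' there is all of $[m-r]$ and each ``$\omega_j$'' is $\phi_j$), the statement ``$\Phi$ is a relaxed $(1,r,m)$-SLMF'' thus reads: $\#\{j \in [m-r] : \phi_j \subseteq \I\} \le \#\I - r$ for every $\I \subseteq [m]$ with $\#\I \ge r+1$, the equality at $\I=[m]$ being automatic because all $m-r$ sets $\phi_j$ lie in $[m]$ and each has $r+1$ elements.

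For the forward implication I would assume $\Phi$ is an $(r,m)$-SLMF, fix $\I \subseteq [m]$ with $\#\I \ge r+1$, and set $\J = \{j \in [m-r] : \phi_j \subseteq \I\}$. If $\J = \emptyset$ there is nothing to check since $\#\I - r \ge 1 > 0$. Otherwise $\bigcup_{j \in \J}\phi_j \subseteq \I$, and feeding this $\J$ into the covering inequality of \eqref{eq:SLMF} gives $\#\J + r \le \#\bigcup_{j \in \J}\phi_j \le \#\I$, i.e. $\#\J \le \#\I - r$, which is exactly the reformulated relaxed condition.

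For the converse I would assume $\Phi$ is a relaxed $(1,r,m)$-SLMF; the condition $\#\phi_j = r+1$ is part of the hypothesis of the lemma, so only the covering inequality in \eqref{eq:SLMF} needs checking, and only for nonempty $\J \subseteq [m-r]$. Given such a $\J$, set $\I = \bigcup_{j \in \J}\phi_j$; then $\#\I \ge r+1$ since $\J \neq \emptyset$ and each $\phi_j$ has $r+1$ elements, and every $j \in \J$ satisfies $\phi_j \subseteq \I$, so $\#\J \le \#\{j \in [m-r] : \phi_j \subseteq \I\} \le \#\I - r$ by the reformulated relaxed condition; rearranging gives $\#\bigcup_{j \in \J}\phi_j \ge \#\J + r$.

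The whole argument is bookkeeping rather than calculation, so there is no genuinely hard step; the one place deserving a moment's attention is the indicator reformulation of $\max\{\#(\phi_j \cap \I) - r, 0\}$ under $\#\phi_j = r+1$, after which each direction is a single application of the respective hypothesis to a well-chosen $\I$ (resp. $\J$), together with the trivial handling of the empty index set.
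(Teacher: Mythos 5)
Your proof is correct and follows essentially the same route as the paper: both directions proceed by the same observation that, under $\#\phi_j=r+1$, the term $\max\{\#(\phi_j\cap\I)-r,0\}$ is the indicator of $\phi_j\subseteq\I$, and then each implication is obtained by the same choice of $\J=\{j:\phi_j\subseteq\I\}$ (forward) and $\I=\bigcup_{j\in\J}\phi_j$ (converse). Your explicit handling of the empty $\J$ and of the equality at $\I=[m]$ only makes the bookkeeping slightly more careful than the paper's.
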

\begin{proof}
Suppose first that $\Phi$ is an $(r,m)$-SLMF. Let $\mathcal I \subset [m]$ with $\# \mathcal I \ge r+1$. Define $\mathcal J = \{ j \in [m-r]: \, \phi_j \subset \mathcal I\}$. If $\mathcal J$ is empty, we have $\max\big\{\# (\phi_j \cap \mathcal I) -r, 0 \big\}=0$ for every $\phi_j$ and thus the $(1,r,m)$-SLMF condition is satisfied for this $\mathcal I$. If $\mathcal J$ is non-empty, by the defining property \eqref{eq:SLMF} of $(r,m)$-SLMF, we must have $\# \bigcup_{j \in \mathcal J} \phi_j \ge \# \mathcal J + r$. But $\bigcup_{j \in \mathcal J} \phi_j \subset \mathcal I$ so that $\# \mathcal J  \le \# \mathcal I - r$. On the other hand, $\# \mathcal J $ is equal to the number of $\phi_j$'s contained in $\mathcal I$, which is equal to $\sum_{j \in [m-r]} \max\big\{\# (\phi_j \cap \mathcal I) -r, 0 \big\}$. This proves the inequality in Definition \ref{dfn:relaxedSLMF} for any $\mathcal I$. The equality for $\mathcal I = [m]$ is evident, since this is saying that the total number of $\phi_j$'s is $m-r$.

Conversely, suppose the inequality in Definition \ref{dfn:relaxedSLMF} is true with $\nu=1$ for any $\mathcal I \subset [m]$. For any $\mathcal J \subset [n]$, set $\mathcal I = \bigcup_{j \in \mathcal J} \phi_j$. Then $\# \mathcal J \le \sum_{j \in [m-r]} \max\big\{\# (\phi_j \cap \mathcal I) -r, 0 \big\} \le \# \bigcup_{j \in \mathcal J} \phi_j -r$, that is, the $(r,m)$-SLMF condition \eqref{eq:SLMF} is satisfied.
\end{proof}

\begin{lemma} \label{lem:SLMF-induced}
Suppose $\Omega_\J = \bigcup_{j \in \J} \omega_j \times \{j\}$ is a relaxed $(1,r,m)$-SLMF for some $\J \subset [n]$. Denote by $\Omega_j$ the set of subsets of $\omega_j$ of cardinality $r+1$. Then there exist $\phi_{j'} \in \bigcup_{j \in \J} \Omega_j$ for $j' \in [m-r]$ such that $\Phi = \bigcup_{j' \in [m-r]} \phi_{j'} \times \{j' \}$ is an $(r,m)$-SLMF.
\end{lemma}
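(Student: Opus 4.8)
The plan is to write down the $\phi_{j'}$'s explicitly by a sliding-window construction inside each $\omega_j$, and then to verify the numerical conditions of Lemma~\ref{lem:SLMF-I} by an elementary counting argument.

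First I would discard the indices $j\in\J$ with $\#\omega_j\le r$: for such $j$ we have $\max\{\#(\omega_j\cap\I)-r,0\}=0$ for every $\I\subseteq[m]$, so removing them changes neither side of any of the conditions in Definition~\ref{dfn:relaxedSLMF}, and we may assume $\#\omega_j\ge r+1$ for all $j\in\J$. Put $d_j:=\#\omega_j-r\ge1$; evaluating the equality in Definition~\ref{dfn:relaxedSLMF} at $\I=[m]$ gives $\sum_{j\in\J}d_j=m-r$. Now fix $j\in\J$ and write $\omega_j=\{a_1<a_2<\cdots<a_{r+d_j}\}$; for $l\in[d_j]$ set $W^j_l:=\{a_l,a_{l+1},\dots,a_{l+r}\}$, an $(r+1)$-element subset of $\omega_j$, hence a member of $\Omega_j$. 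Listing the $W^j_l$ over all $j\in\J$ and all $l\in[d_j]$ yields exactly $\sum_{j\in\J}d_j=m-r$ sets; relabel them as $\phi_1,\dots,\phi_{m-r}$ in any order and set $\Phi:=\bigcup_{j'\in[m-r]}\phi_{j'}\times\{j'\}$. Each $\phi_{j'}$ has $r+1$ elements and lies in $\bigcup_{j\in\J}\Omega_j$, so by Lemma~\ref{lem:SLMF-I} it remains to check that $\Phi$ is a relaxed $(1,r,m)$-SLMF. Since $\#\phi_{j'}=r+1$, the term $\max\{\#(\phi_{j'}\cap\I)-r,0\}$ is the indicator of $\phi_{j'}\subseteq\I$, so this amounts to showing $\#\{j'\in[m-r]:\phi_{j'}\subseteq\I\}\le\#\I-r$ for every $\I\subseteq[m]$ with $\#\I\ge r+1$, with equality at $\I=[m]$.

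The key is a local estimate: for a fixed $j\in\J$ and any $\I\subseteq[m]$, the number of $l\in[d_j]$ with $W^j_l\subseteq\I$ is at most $\max\{\#(\omega_j\cap\I)-r,0\}$. Indeed, identifying $\omega_j$ order-preservingly with $\{1,\dots,r+d_j\}$, the relation $W^j_l\subseteq\I$ says exactly that the block $\{l,l+1,\dots,l+r\}$ of consecutive integers is contained in $\omega_j\cap\I$; writing $\omega_j\cap\I$ as a disjoint union of maximal runs of consecutive integers of sizes $s_1,\dots,s_t$, the number of admissible $l$ equals $\sum_{b}\max\{s_b-r,0\}$, and one always has $\sum_{b}\max\{s_b-r,0\}\le\max\{\sum_{b}s_b-r,\,0\}=\max\{\#(\omega_j\cap\I)-r,0\}$. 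Summing over $j\in\J$ and invoking the hypothesis that $\Omega_\J$ is a relaxed $(1,r,m)$-SLMF, I get
\[
\#\{j'\in[m-r]:\phi_{j'}\subseteq\I\}=\sum_{j\in\J}\#\{l\in[d_j]:W^j_l\subseteq\I\}\le\sum_{j\in\J}\max\{\#(\omega_j\cap\I)-r,0\}\le\#\I-r
\]
for all $\I$ with $\#\I\ge r+1$; and at $\I=[m]$ every $\phi_{j'}$ is contained in $\I$, so the left-hand side equals $\sum_{j\in\J}d_j=m-r=\#[m]-r$, which is the required equality. Hence $\Phi$ is a relaxed $(1,r,m)$-SLMF, so an $(r,m)$-SLMF by Lemma~\ref{lem:SLMF-I}, which is the assertion. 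I do not expect a genuine obstacle: the one point needing care is the bookkeeping in the local estimate — that the windows contained in $\I$ are governed by the runs of $\omega_j\cap\I$, not of $\I$ itself — together with the elementary $\max$-inequality, both of which are routine.
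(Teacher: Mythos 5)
Your proof is correct and takes essentially the same route as the paper's: produce $\#\omega_j-r$ sets of size $r+1$ inside each $\omega_j$ (totalling $m-r$ by the equality at $\I=[m]$), bound for each $j$ the number of chosen sets contained in a given $\I$ by $\max\{\#(\omega_j\cap\I)-r,0\}$, sum and invoke the hypothesis, and conclude via Lemma \ref{lem:SLMF-I}. The only difference is the explicit family: the paper fixes an $r$-subset $\psi_j\subset\omega_j$ and takes $\phi_{j,t}=\psi_j\cup\{t\}$, which makes the local count immediate, whereas your consecutive windows need the (correct) maximal-runs estimate.
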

\begin{proof}
Define $\mathcal J' = \{j \in \mathcal J: \, \# \omega_j > r \}$. For every $j \in \mathcal J'$ fix any $\psi_j \subset \omega_j$ with $\# \psi_j = r$, and for every $t \in \omega_j \setminus \psi_j$ define $\phi_{j, t}  = \psi_j \cup \{t\}$. For $j \in \mathcal J \setminus \mathcal J'$ set $\psi_j  = \emptyset$. Setting $\I = [m]$ in the Definition \ref{dfn:relaxedSLMF} of relaxed $(1,r,m)$-SLMF gives $\sum_{j \in \J} \max\{\# (\omega_j \setminus \psi_j) -r, 0\} = m-r$. Thus in total we have $m-r$ $\phi_{j,t}$'s and so we can order them as $\phi_1,\dots,\phi_{m-r}$. Set $\Phi=\bigcup_{j \in [m-r]} \phi_{j} \times \{j\}$. Now, let $\mathcal I $ be any subset of $[m]$ with $\# \mathcal I \ge r+1$. Note that for any $j \in \mathcal J'$ we have that $\max\big\{\# (\phi_{j,t} \cap \mathcal I) -r, 0 \big\} $ is zero whenever $\phi_{j,t}$ is not contained in $\mathcal I$, and it is equal to $1$ otherwise. On the other hand, for $j \in \mathcal J'$ the quantity $\# (\omega_j \cap \mathcal I ) -r$ is at least equal to the number of $\phi_{j,t}$'s that are contained in $\mathcal I$. Hence, $\# \mathcal I -r \ge \sum_{j \in \mathcal J} \max\big\{\# (\omega_j \cap \mathcal I) -r, 0 \big\}  = \sum_{j \in  \mathcal J'} \max\big\{\# (\omega_j \cap \mathcal I) -r, 0 \big\} \ge \sum_{j \in [m-r]} \max\big\{\# (\phi_j \cap \mathcal I) -r, 0 \big\}$. Thus $\Phi$ is a relaxed $(1,r,m)$-SLMF and so by Lemma \ref{lem:SLMF-I}  $\Phi$ is an $(r,m)$-SLMF.
\end{proof}

\begin{lemma} \label{lem:less-than-r}
Suppose $\# \Omega = r(m+n-r)$ but there is some $j \in [n]$ such that $\#\omega_j < r$. Then $\Omega$ is not a base of the algebraic matroid of $\M(r, m \times n)$. 
\end{lemma}
\begin{proof}
Let $X \in \M(r, m \times n)$ be any rank-$r$ $k$-rational point. Let $B \in k^{m \times r}$ be a basis of the column-space of $X$. With $x_j$ the $j$th column of $X$, the $\#\omega_j \times r$ linear system of equations $\pi_{\omega_j}(x_j) = \pi_{\omega_j}(B) c$ in the unknown $c$ is underdetermined, so it has infinitely many solutions. For any such solution $c$, let $X_c$ be the $m \times n$ matrix whose $j$th column is $Bc$ and otherwise it coincides with $X$. Since $Bc$ is in the column-space of $X$, the matrix $X_c$ has rank $r$ and by construction it agrees with $X$ on $\Omega$, that is $X_c$ is in the fiber of $\pi_\Omega$ over $X$. We conclude that $\pi_\Omega^{-1}(\pi_\Omega(X))$ is an infinite set for every $k$-rational point $X$ of rank $r$. 
Now $\M(r, m \times n)$ is a rational variety; for instance the localization $(k[Z]/I_{r+1}(Z))_p$ where $p$ is the determinant of the top leftmost $r \times r$ submatrix of $Z$ is the localization of the polynomial ring $k[z_{ij}: \, \min\{i,j\}\le r]$ at $p$. As such, the set of $k$-rational points is dense and so it will intersect the non-empty open locus where the fiber of $\pi_\Omega$ achieves its minimal dimension. That is, the generic fiber will have positive dimension and by the discussion in section \ref{subsection:TheAlgebraicMatroid}, $\Omega$ can not be a base set.
\end{proof}

\subsection{Proof of Theorem \ref{thm:main}} \label{subsection:Proof-main}

Let us deal with the codimension-$1$ case first, where $r = m-1$ and $I_{r+1}(Z)$ is the ideal of maximal minors of $Z$. Recall from section \ref{subsection:known-facts} that the maximal minors are a universal Gr\"obner basis for $I_m(Z)$, which implies that an $\Omega$ with $\#\Omega = (m-1)(n+1)$ is a base set of the algebraic matroid of $\M(m-1, m \times n)$ if and only if $\mathcal{G}_\Omega$ does not contain the complete bipartite graph $K_{m,m}$. Now, $\Omega_{\mathcal{J}_\ell}$ is a relaxed $(1,m-1,m)$-SLMF if and only if there is exactly one $j \in \mathcal{J}_\ell$ such that $\omega_j = [m]$. Thus the hypothesis implies that there are exactly $m-1$ $\omega_j$'s in $\Omega$ with cardinality $m$. Hence $\mathcal{G}_\Omega$ can not contain $K_{m,m}$, and this settles the codimension-$1$ case.

Now we claim that for any $r$ as in Notation \ref{not:rmn} the hypothesis implies $\#\omega_j \ge r$ for every $j \in [n]$. To see this, note that by the definition of a relaxed $(1,r,m)$-SLMF $\Omega_{\mathcal{J}_\ell}$ we have $\sum_{j \in \J_\ell} \max\{\#\omega_j-r,0\} = m-r$, and so $\sum_{\ell \in [r]} \sum_{j \in \J_\ell} \max\{\#\omega_j-r,0\}=r(m-r)$. On the other hand, $\#\Omega = r(m+n-r) = \sum_{j \in [n]} \#\omega_j$ and so $\sum_{j \in [n]} (\#\omega_j -r) = r(m-r)$. Necessarily $\#\omega_j -r = \max\{\#\omega_j-r,0\}$ for every $j \in [n]$, i.e. $\#\omega_j \ge r, \, \forall j \in [n]$. Thus in view of Lemma \ref{lem:omega-r}, we may assume that $\# \omega_j \ge r+1$ for every $j \in [n]$; however after this reduction it may no longer be true that $m \le n$. 

For the rest of the proof we assume $r \le m-2$. In view of Lemma \ref{lem:k->K} we may assume that $k$ is algebraically closed. By Lemma \ref{lem:SLMF-induced}, for every $\ell \in [r]$ there are $\phi_{j}^\ell \in \bigcup_{t \in \J_\ell} \Omega_{t}, \, j \in [m-r]$, such that $\Phi_\ell = \bigcup_{j \in [m-r]} \phi_{j}^\ell \times \{j\}$ is an $(r,m)$-SLMF. Recalling the notation of section \ref{section:SLMF}, for a closed point $X \in \M(r,m \times n)$ and $S$ the column-space of $X$, the condition $S \in \bigcap_{\ell \in [r]} V_{\Phi_\ell}$ is true on a dense open set of $\M(r,m \times n)$, which can be described as follows. Let $p =\prod_{\ell \in [r]} p_{\Phi_\ell}$ where $p_{\Phi_\ell} \in T$ is the polynomial in Pl{\"u}cker coordinates given in Proposition \ref{prp:p-Phi}. For any $\psi \subseteq [n]$ with $\# \psi = r$, replace every $[\phi_\beta^\ell \setminus \{\alpha\}]$ in $p$ by the $r \times r$ minor of $Z$ with row indices $\phi_\beta^\ell \setminus \{\alpha\}$ and column indices $\psi$, to obtain a polynomial $p_\psi \in k[Z]$. Varying $\psi$ gives the open set $U_{\Omega} = \bigcup_{\psi \subseteq [n], \#\psi = r} \Spec(k[Z]/I_{r+1}(Z))_{\overline{p}_\psi}$ of $\M(r,m \times n)$, where $(k[Z]/I_{r+1}(Z))_{\overline{p}_\psi}$ is the localization of $k[Z]/I_{r+1}(Z)$ at the multiplicatively closed set $\{1,\overline{p}_\psi,\overline{p}_\psi^2,\dots\}$, with $\overline{p}_\psi$ the class of $p_\psi$ in $k[Z]/I_{r+1}(Z)$. Then $S \in \bigcap_{\ell \in [r]} V_{\Phi_\ell}$ if and only if $X \in  U_{\Omega}$. To see that $U_\Omega$ is non-empty, first note that $\bigcap_{\ell \in [r]} V_{\Phi_\ell}$ is non-empty, because it is the intersection of finitely many dense open sets of the irreducible projective scheme $\Gr(r,m)$. Let $S \in \bigcap_{\ell \in [r]} V_{\Phi_\ell}$ be any closed point and $s_\ell \in k^m, \, \ell \in [r]$, a $k$-basis for $S$. Define $X \in \M(r,m \times n)$ by setting $x_j = s_\ell$ whenever $j \in \J_\ell$.  Now choose a $\psi \subset [n]$ that contains exactly one index from each $\J_\ell$. Then $p_\psi(X)$ is the evaluation of the polynomial $p_\psi$ at the Pl\"ucker coordinates of the subspace $S$ computed from its $k$-basis of $s_\ell$'s; by the choice of $S$ we have that $p_\psi(X)\neq 0$ and so $X \in U_\Omega$.  

Let $\pi_\Omega': U_\Omega \rightarrow \mathbb{A}^{\Omega}$ be the restriction of $\pi_\Omega$ to $U_\Omega$. Let $X'=[x_1' \cdots x_n']  \in \pi_\Omega'^{-1} \big( \pi'_\Omega(X) \big)$ be a closed point in the fiber over the $X=[x_1 \cdots x_n]$ defined above; here $x_j$ and $x_j'$ is the $j$th column of $X$ and $X'$, respectively. Let $S'$ be the column-space of $X'$. Since $x_j' \in S'$ we also have $\pi_{\omega_j}(x_j') \in \pi_{\omega_j}(S')$ for any $ j \in [n]$. Moreover, $\pi_{\omega_j}(x_j') = \pi_{\omega_j}(x_j)$ for every $j \in [n]$. By the definition of $X$ we conclude that $\pi_{\phi_j^\ell}(s_\ell) \in \pi_{\phi_j^\ell}(S')$ for every $j \in [m-r]$ and every $\ell \in [r]$. Since $X' \in U_\Omega$, we have $S' \in \bigcap_{\ell \in [r]} V_{\Phi_\ell}$. Then by Lemma \ref{lem:PointProjection} we must have that $s_\ell \in S'$ for every $\ell \in [r]$. But then $S' = S$. Now take any $j \in [n]$ and recall the assumption $\#\omega_j \ge r+1$. Suppose that $j \in \mathcal J_\ell$ for some $\ell \in [r]$. By the proof of Lemma \ref{lem:SLMF-induced}, there is at least one $\phi_t^\ell$ contained in $\omega_j$; recall here that $\Phi_\ell = \bigcup_{t \in [m-r]} \phi_t^\ell \times \{t\}$ is the $(r,m)$-SLMF induced by the relaxed $(1,r,m)$-SLMF $\Omega_{\mathcal J_\ell}$. By Lemma \ref{lem:NonDimensionDrop} the $k$-vector space $\pi_{\phi_{t}^\ell}(S)$ has dimension equal to $r$. Thus $\dim_k \pi_{\omega_j}(S)=r$. Since $\pi_{\omega_j}(x_j) = \pi_\omega(x_j')$, Lemma \ref{lem:x-reconstruction} gives $x_j = x_j'$. This is true for every $j$ and so $X=X'$. We conclude that $X$ is the only closed point of the fiber $\pi_\Omega'^{-1} \big( \pi'_\Omega(X) \big)$. As a scheme, the fiber is the spectrum of a finitely generated $k$-algebra $R$; such a ring is a Jacobson ring, that is every prime ideal of $R$ is the intersection of maximal ideals (e.g., Theorem 4.19 in \cite{Eisenbud-2004}). But as we just saw, $R$ has a single maximal ideal and so $\pi_\Omega'^{-1} \big( \pi'_\Omega(X) \big)$ is a $0$-dimensional scheme, which as a topological space consists of a single point $X$. 

Now $\pi_\Omega$ is a morphism of integral schemes. Since an open subscheme of an integral subscheme is integral, $\pi_\Omega'$ is also a morphism of integral schemes. Since $\dim U_\Omega = \dim \mathbb{A}^{\Omega} = \dim \M(r, m \times n)$ and $\dim \pi_\Omega'^{-1} \big( \pi'_\Omega(X) \big) = 0$, the upper-semicontinuity of the fiber dimension on the source for morphisms of finite-type $k$-schemes shows that $\pi_\Omega'$ is dominant. But then a fortiori $\pi_\Omega$ is dominant, whence $\Omega$ is a base set of the algebraic matroid of $k[Z]/I_{r+1}(Z)$ (recall the discussion in section \ref{subsection:TheAlgebraicMatroid}).
\subsection{Proof of Proposition \ref{prp:necessary}} \label{subsection:proof-prp:necessary}

We begin with some reductions. Since $\Omega$ is a base set, Lemma \ref{lem:less-than-r} gives $\#\omega_j \ge r$ for every $j \in [n]$. Neither the hypothesis nor the conclusion of the proposition is affected if we remove the $\omega_j$'s of cardinality $r$; for the former we refer to Lemma \ref{lem:omega-r}, for the latter we observe that the condition of a relaxed $(\nu,r,m)$-SLMF is insensitive to the presence of such $\omega_j$'s. We will thus assume $\# \omega_j \ge r+1$ for every $j \in [n]$. By Lemma \ref{lem:k->K} we assume $k$ is algebraically closed. 

We identify convenient open sets to work over. We let $U_1$ be the open set of $\M(r, m \times n)$ on which every closed point $X$ has column-space of dimension $r$, and so does the projection of its column-space onto any $r$ coordinates of $k^m$. For any $X=[x_1 \cdots x_n] \in  U_1$ there is a dense open set $V_X \subset \Gr(r,m)$ with the following property: for any $S \in V_X$ and any $k$-basis $B \in k^{m \times r}$ of $S$, and for every $j \in [n]$ and any $\psi_j \subset \omega_j$ with $\#\psi_j = r$, i) the matrix $\pi_{\psi_j}(B) \in k^{r \times r}$ is invertible, and ii) the matrix 
\begin{align}
X_S:=[B \pi_{\psi_1}(B)^{-1} \pi_{\psi_1}(x_1) \cdots B \pi_{\psi_n}(B)^{-1} \pi_{\psi_n}(x_n)] \in k^{m \times n} \label{eq:X_S}
\end{align}

\noindent has rank $r$; that $V_X$ is open can be checked on the standard affine open cover of $\Gr(r,m)$, that it is non-empty follows because Lemma \ref{lem:x-reconstruction}) gives $X_{\mathfrak{c}(X)}= X$, where $\mathfrak{c}(X)$ is the column-space of $X$, so $\mathfrak{c}(X) \in V_X$.

We verify the relaxed $(r,r,m)$-SLMF condition for $\mathcal I = [m]$. Since $\Omega$ is a base set, $\#\Omega=\sum_{j \in [n]} \# \omega_j = r(m+n-r)$. With this, since $\# \omega_j \ge r$ for every $j \in [n]$, we have $\sum_{j \in [n]} \max\{\# \omega_j-r,0\} =\sum_{j \in [n]} (\# \omega_j-r)= r(m-r) = r(\# \mathcal I -r)$, where $\mathcal I = [m]$. 

Next, we define certain hyperplane sections of $\Gr(r,m)$. Since $\Omega$ is a base set, there is a dense open set $ U_2 \subset \M(r, m \times n)$ such that the fiber $\pi_\Omega^{-1}(\pi_\Omega(X))$ is $0$-dimensional for any $X \in U_2$. Pick an $X=[x_1 \cdots x_n] \in U_1 \cap U_2$. Denote by $x_{ij}$ the $i$th coordinate of $x_j$. For any $j \in [n]$ fix a $\psi_j \subset \omega_j$ with $\#\psi_j=r$ and for every $t \in \omega_j \setminus \psi_j$ let $\phi_{j,t}=\psi_j \cup \{t\} = \{i_1,\dots,i_{r+1}\} \in \Omega_j$. Recalling the notation of section \ref{subsection:Conventions}, we define a linear form in Pl{\"u}cker coordinates
$$ l_{j,\psi_j,t} = \sum_{\alpha \in [r+1]} \, (-1)^{\alpha+1} \, x_{i_\alpha j} \, [\phi_{j,t} \setminus \{i_\alpha\}] \, \in \,  T. $$

\noindent We let $ L_\Omega $ be the ideal of $ T$ generated by the $l_{j,\psi_j,t}$'s for all $j$'s and $t$'s, and we note that there are $\sum_{j \in [n]} \max\{\# \omega_j-r,0\} =\sum_{j \in [n]} (\# \omega_j-r)= r(m-r) = \dim \Gr(r,m)$ in total $l_{j,\psi_j,k}$'s regardless of how we choose the $\psi_j$'s. 

Now we show that the linear section of $\Gr(r,m)$ we just defined cuts $V_X$ to a zero-dimensional locus. Let $ V_X'$ be any basic open subset of $V_X$, say $V_X' = \Spec (T/\p)_{(\overline{p})} $ for some homogeneous polynomial $p \in T$, with $\overline{p}$ its class in $ T/\p$ and $( T/\p)_{(\overline{p})}$ the homogeneous localization of $ T/\p$ at the multiplicatively closed set $\{1,\overline{p},\overline{p}^2,\dots\}$. In view of Lemma \ref{lem:sections}, every closed point of $\Spec (T/\p+L_\Omega)_{(\overline{p})} $ is an $r$-dimensional linear subspace $S$ of $k^m$ for which $\pi_{\phi_{j,t}}(x_j) \in \pi_{\phi_{j,t}}(S)$ for every $j \in [n]$ and every $t \in \omega_j \setminus \psi_j$. In particular, $\pi_{\psi_j}(x_j) \in \pi_{\psi_j}(S)$ for every $j \in [n]$. By definition of $V_X$ we have $\dim_{k} \pi_{\psi_j}(S)=r$ for every $j \in [n]$. Thus by Lemma \ref{lem:x-reconstruction} and its proof, the matrix $X_S$ in \eqref{eq:X_S} is the unique matrix $[v_1 \cdots v_n] \in k^{m \times n}$ with columns in $S$ that satisfies $\pi_{\psi_j}(v_j)=\pi_{\psi_j}(x_j)$ for every $j \in [n]$. Now, the relations $\pi_{\phi_{j,t}}(x_j) \in \pi_{\phi_{j,t}}(S)$ and Lemma \ref{lem:x-reconstruction-union} imply that $\pi_{\omega_j}(x_j) \in \pi_{\omega_j}(S)$ for every $j \in [n]$. Then again by Lemma \ref{lem:x-reconstruction}, there is a unique matrix $[w_1 \cdots w_n] \in k^{m \times n}$ with columns in $S$ that satisfies $\pi_{\omega_j}(w_j) =\pi_{\omega_j}(x_j)$ for every $j \in [n]$. But note that this matrix also satisfies $\pi_{\psi_j}(w_j)=\pi_{\psi_j}(x_j)$, and by the said uniqueness it must be equal to $X_S$. This shows that $S$ induces a unique rank-$r$ completion $X_S \in U_1 \cap U_2$ of $\pi_\Omega(X)$, and in turn $X_S$ uniquely determines $S$ by the relation $S = \mathfrak{c}(X_S)$. Since by hypothesis $\pi_\Omega^{-1}(\pi_\Omega(X))$ is a finite set, we must have $\dim \Spec ( T/\p+ L_\Omega)_{(\overline{p})} =0$ or equivalently the Krull dimension of $( T/\p+ L_\Omega)_{(\overline{p})}$ is zero. 

Suppose that $\Omega$ violates the relaxed $(r,r,m)$-SLMF condition for some $\mathcal I \subsetneq [m]$ with $\# \mathcal I \ge r+1$. Then $\sum_{j \in [n]} \max\{\#(\omega_j \cap \mathcal I) -r,0 \}> r(\#\mathcal I-r)$. With $\mathcal J$ the set of $j$'s for which $\#(\omega_j \cap \mathcal I) -r > 0$, we equivalently have $\sum_{j \in \mathcal J} (\#(\omega_j \cap \mathcal I) -r)> r(\#\mathcal I-r)$. For every $j \in \mathcal J$ let us select the $\psi_j$ associated to the construction of the $l_{j,\psi_j,t}$'s to be a subset of $\mathcal I$. Then this inequality says that among a total of $r(m-r)$ linear forms generating $L_\Omega $ there are more than  $r(\#\mathcal I-r)$ of them that are supported on Pl{\"u}cker coordinates with indices in $\mathcal I$. Fix any $j \in \J$ and set $\psi=\psi_j$. By definition of $ V_X$ and $V_X'$, we may assume that $[\psi]$ divides $p$. Thus $( T/\p+ L_\Omega)_{(\overline{p})} = \big(( T/\p+ L_\Omega)_{(\overline{[\psi]})}\big)_{(\overline{p})}$. Now $( T/\p)_{(\overline{[\psi]})}$ is just a polynomial ring over $k$ of dimension $r(m-r)$, because it is the coordinate ring of $\Gr(r,m)$ at the standard open set $[\psi] \neq 0$. The variables of this polynomial ring can be thought organized in the entries corresponding to rows indexed by $[m] \setminus \psi$, of an $m\times r$ matrix $B$ whose $r \times r$ submatrix indexed by rows $\psi$ is the identity. Hence $( T/\p+ L_\Omega)_{(\overline{p})}$ is the localization of the quotient of a polynomial ring $k[B]$ of dimension $r(m-r)$ by an ideal $ J$ generated by $r(m-r)$ elements; let us call this $(k[B] /  J)_{\overline{f}}$. To obtain the generators of $ J$, we replace each variable $[\psi'] \in  T$ by the $r\times r$ determinant of $B$ associated to row indices $\psi'$. As $\psi'$ ranges among all subsets $\mathcal I$ of size $r$, this substitution yields among a total of $r(m-r)$ generators of $ J$ more than $r(\#\mathcal I-r)$ of them involving at most $r(\# \mathcal I-r)$ variables of $k[B]$. Let $ J_1$ be the ideal of $k[B]$ generated by this subset of generators of $ {J}$, and let $ J_2$ be the ideal generated by the remaining generators of $ J$. Since the generators of $J_1$ live in a polynomial ring of dimension $r(\# \mathcal I-r)$, every minimal prime $P$ over $J_1$ must have height at most $r(\# \mathcal I-r)$. Since $J_2$ has less than $r(m-r) - r(\# \mathcal I-r)$ generators, Krull's height theorem gives that every minimal prime $Q$ of $J_2$ has height less than $r(m-r) - r(\# \mathcal I-r)$. Now, every minimal prime $H$ of $J= J_1 +  J_2$ is necessarily a minimal prime over $P+Q$ for some $P$ and $Q$ as above, and by intersection theory in affine space (Proposition I.7.1 in \cite{hartshorne1977algebraic} or Chapter III Proposition 17 in \cite{serre2012local}, or even more generally Chapter V Theorem 3 in \cite{serre2012local}) $H$ has height at most $\height(P) + \height (Q)< r(m-r)$. Hence, every irreducible component of $\Spec k[B] /  J$ has positive dimension, contradicting the fact that the Krull dimension of $(k[B] /  J)_ {\overline{f}} $ is zero. Thus $\Omega$ must satisfy the condition of a relaxed $(r,r,m)$-SLMF for every $\mathcal I \subsetneq [m]$ with $\# \mathcal I \ge r+1$. 

\subsection{Proof of Proposition \ref{prp:conj}} \label{subsection:proof-prp:conj}

The \emph{if} part follows from Theorem \ref{thm:main} and Proposition \ref{prp:necessary} so that only the \emph{only if} part needs proving. By a similar argument as in the second paragraph of the proof of Theorem \ref{thm:main}, the relaxed $(r,r,m)$-SLMF condition implies $\#\omega_j \ge r$ for every $j \in [n]$. Since the $(\nu,r,m)$-SLMF condition is insensitive to the presence of $\omega_j$'s with cardinality $r$, we will assume that  $\# \omega_j \ge r+1$ for every $j \in [n]$; after this reduction it may no longer be true that $m \le n$.  

\emph{i) $\underline{r=1}$.} For $r=1$ the statement is trivially true. 

\emph{ii) $\underline{r=m-1}$.}  By hypothesis, $\omega_j = [m], \, \forall j \in [n]$. Thus $\#\Omega = mn$. On the other hand, $\#\Omega = \dim \M(m-1, m \times n) = (m-1)(n+1)$. Thus $mn = (m-1)(n+1)$, which gives $n = m-1=r$. Taking $\J_\ell = \{\ell\}$ for every $\ell \in [n]=[m-1]$ gives the required partition. 

\emph{iii) $\underline{r=m-2}$.} When $r = m-2$, we have $m \ge 3$, since by Notation \ref{not:rmn} $r$ is always positive. Since $\#\omega_j \ge r+1$ for every $j \in [n]$, each $\omega_j$ is either equal to $[m]$ or has cardinality $m-1$. Without loss of generality we assume $\omega_j = [m]$ for $j=n-\alpha+1,\cdots,n$, and $\# \omega_j = m-1$ for $j \in [n-\alpha]$, for some non-negative integer $\alpha$. As in the previous case, from $\#\Omega = (n-\alpha)(m-1)+\alpha m = (m-2)(n+2)$, we have $n = 2m-4-\alpha$. 

We construct the partition $[n] = \bigcup_{\ell \in [m-2]} \J_\ell$ as follows. Since $[m]$ is a relaxed $(1,m-2,m)$-SLMF, we set $\J_\ell = \{\ell\}$ for $\ell = n-\alpha+1,\dots,n$. It remains to partition $\Omega_{[n-\alpha]} = \Omega_{[2m-4-2\alpha]}$ into $m-2-\alpha$ relaxed $(1,m-2,m)$-SLMF's $\Omega_{\J_1},\dots,\Omega_{\J_{m-2-\alpha}}$. For $j \in [2m-4-2\alpha]$ we re-order the $\omega_j$'s so that equal $\omega_j$'s are placed consecutively. For $\ell \in [m-2-\alpha]$ we define $\J_\ell = \{\ell, \ell + m-2-\alpha\}$. The only way that some $\Omega_{\J_\ell}$ is not a relaxed $(1,m-2,m)$-SLMF is if $\omega_\ell = \omega_{\ell+m-2-\alpha}$. By the way we have ordered the $\omega_j$'s, necessarily $\omega_\ell = \omega_{\ell+1} = \cdots = \omega_{\ell+m-3-\alpha} =\omega_{\ell+m-2-\alpha}$. Then for $\I = \omega_\ell$ we have
\begin{align}
\sum_{j \in [n]} \big(\#(\omega_j \cap \I) - (m-2)\big) &= \sum_{j \in [n-\alpha]} \big(\#(\omega_j \cap \I) - (m-2)\big) + \sum_{j \ge n-\alpha+1} \big(\#(\omega_j \cap \I) - (m-2)\big) \nonumber \\
& \ge \big((m-2-\alpha)+1\big) + \alpha = m-2+1 > r (\# \I -r) = m-2. \nonumber
\end{align} This violates the hypothesis of relaxed $(m-2,m-2,m)$-SLMF on $\Omega$.

\subsection{Proof of Proposition \ref{prp:Amini}} \label{subsection:proof-prp:Amini}
\emph{Step 1.} Define a function $f:2^{[n]} \rightarrow \mathbb{N}$ by $f(\J) = \# \bigcup_{j \in \J} \omega_j -r$ for every non-empty $\J \subset [n]$ and $f(\emptyset)=0$. One easily sees that $f$ has the so-called \emph{intersection-submodular} property, i.e. it satisfies the submodular inequality $f(\J_1) +f(\J_2) \ge f(\J_1 \cup \J_2) + f(\J_1 \cap \J_2)$, for any $\J_1, \J_2 \subset [n]$ with $\J_1 \cap \J_2 \neq \emptyset$. Now, by Theorems 2.5 and 2.6 in \cite{fujishige2005submodular}, $f$ induces a submodular function $\hat{f}:2^{[n]} \rightarrow \mathbb{N}$, the so-called \emph{Dilworth-truncation} of $f$, given by the formula $\hat{f}(\J) = \min_{\J_1,\cdots,\J_s} \sum_{i \in [s]} f(\J_i)$, where the collection of $\J_i, \, i \in [s]$ varies across all possible partitions of $\J$. In addition to being submodular, one readily checks that $\hat{f}(\emptyset)=0$, $\hat{f}$ is non-decreasing, that is $\hat{f}(\J_2) \le \hat{f} (\J_1)$ for any $\J_2 \subset \J_1$, and also $\hat{f}(j) = 1$ for every $j \in [n]$. This makes $\hat{f}$ into the rank function of a matroid $\mathcal{M}$ on $[n]$. 

\emph{Step 2.} The rank of $\mathcal M$ is equal to $\hat{f}([n])$ and we now show that this value is $m-r$. First, note that an application of the relaxed $(r,r,m)$-SLMF condition for $\I = [m]$, together with the hypothesis $\#\omega_j = r+1, \, \forall j \in [n]$, yields that $n = r(m-r)$. Now, if $\I = \bigcup_{j \in [n]} \omega_j$ is not equal to $[m]$, another application of the same condition with this $\I$ gives the contradiction $n \le r(\# \I -r) < r(m-r)$. So $\bigcup_{j \in [n]} \omega_j = [m]$ and thus $\hat{f}([n]) \le f([n]) = m-r$. Suppose there is some partition $\J_i, \, i \in [s]$, of $[n]$ with $s >1$ such that  
$\hat{f}([n]) = \sum_{i \in [s]} f(\J_i) < m-r.$ For every $\J_i$ define $\I_i = \bigcup_{j \in \J_i} \omega_j$, then the inequality can be written as $\sum_{i \in [s]} (\#\I_i-r)< m-r$. From the $(r,r,m)$-SLMF condition applied to $\I_i$ we get 
$\# \J_i \le \sum_{j \in [n]} \max\{\#(\omega_j \cap \I_i)-r,0\} \le r(\# \I_i -r)$. Combining all the above, we arrive at the contradiction $r(m-r)=n=\sum_{i \in [s]} \# \J_i \le \sum_{i \in [s]} r(\# \I_i -r) < r(m-r)$. 

\emph{Step 3.} We show that $\J \subset [n]$ is a base of $\mathcal M$ if and only if $\Omega_\J$ is a relaxed $(1,r,m)$-SLMF. Under the hypothesis $\#\omega_j = r+1, \, \forall j \in [n]$, by Lemma \ref{lem:SLMF-I} we have that $\Omega_\J$ is a relaxed $(1,r,m)$-SLMF if and only if it is an $(r,m)$-SLMF. Suppose $\J \subset [n]$ is a base of $\mathcal M$. Then $m-r = \rank \mathcal{M} = \#\J =\hat{f}(\J) \le f(\J) =  \bigcup_{j \in \J} \omega_j -r \le m-r$. Thus equality holds everywhere and $\Omega_{\J}$ satisfies the equality required in \eqref{eq:SLMF}. Let $\J'$ be any subset of $\J$. Since $\J$ is independent, so is $\J'$. Thus $\hat{f}(\J') = \# \J'$, because the rank of an independent set in a matroid equals the size of the set. Since by definition $\hat{f}(\J') \le f(\J')$, we also have $\# \J' \le f(\J') = \# \bigcup_{j \in \J'} \omega_j -r$. Thus $\Omega_{\J}$ also satisfies the inequalities required in \eqref{eq:SLMF} and so it is an $(r,m)$-SLMF. Conversely, a similar argument as in Step 2 shows that if $\Omega_\J$ is an $(r,m)$-SLMF, then $\hat{f}(\J) = m-r$. 

\emph{Step 4.} It is enough to show the existence of $r$ disjoint bases of $\mathcal{M}$. This follows immediately from a theorem of Edmonds \& Fulkerson together with the definition of relaxed $(r,r,m)$-SLMF. Indeed, an application of Theorem 2b in \cite{edmonds1965transversals}, shows that such bases exist if and only if for every $\J \subset [n]$ we have $\# \J \ge r(m-r)-r \, \hat{f}([n]\setminus \J)$. Now let $J_i, \, i \in [s]$, be a partition of $[n]\setminus \J$ such that $\hat{f}([n]\setminus \J) = \sum_{i \in [s]} f(J_i)$. Then we have to show that $r(m-r) \le \#\J + \sum_{i \in [s]} r \, f(\J_i)$. This follows by noting, as in Step 2, that the relaxed $(r,r,m)$-SLMF condition implies $\#\J_i \le r \, f(\J_i)$, while from Step 1 we have $ n = r(m-r)$, and of course $\#\J+ \sum_{i \in [s]} \# \J_i = n$.

\subsection{Proof of Proposition \ref{prp:injective}} \label{subsection:proof-prp:injective}


\emph{Step 1.} We define a convenient dense open locus $U_1 \subset \M(r,m \times n)$. Write $\Phi_\ell = \Phi = \bigcup_{j \in [m-r]} \phi_j \times \{j\}$ for every $\ell$. By hypothesis, for every $\alpha \in [m-r]$ there is a subset $\mathcal{L}_\alpha \subseteq [n]$ of size $r$ such that $\phi_\alpha \subseteq \omega_{j}, \, \forall j \in \mathcal{L}_\alpha$. For a $k$-rational point $X=[x_1 \cdots x_n] \in \M(r, m \times n)$ denote by $\mathfrak{c}(X)$ the column-space of $X$. Call $U_1$ the non-empty open set of $\M(r,m \times n)$ on which i) $\mathfrak{c}(X)$ lies in $V_\Phi$ (section \ref{subsection:SLMFs}), ii) none of the Pl{{\"u}}cker coordinates of $\mathfrak{c}(X)$ vanishes, iii) the $\{x_j: \,  j \in \mathcal{L}_\alpha\}$ are a $k$-basis for $\mathfrak{c}(X)$ for every $\alpha \in [m-r]$, and iv) $\pi_\Omega^{-1}(\pi_\Omega(X))$ is zero-dimensional (possible because by Theorem \ref{thm:main} $\Omega$ is a base set). Since $\Span(x_j: \, j \in \mathcal{L}_\alpha)$ is the same as $\mathfrak{c}(X)$, we also have $\Span(\pi_{\phi_\alpha}(x_j): \, j \in \mathcal{L}_\alpha) = \pi_{\phi_\alpha}(\mathfrak{c}(X))$. Proposition \ref{prp:SZ93} asserts that the vector spaces $\pi_{\phi_\alpha}\big(\mathfrak{c}(X)\big), \, \alpha \in [m-r]$, uniquely determine $\mathfrak{c}(X)$ on $V_\Phi$. Since all $\pi_{\phi_\alpha}(x_j)$'s can be extracted from $\pi_\Omega(X)$, we have that $\pi_\Omega(X)$ uniquely determines $\mathfrak{c}(X)$ on $V_\Phi$. Since $\# \omega_j \ge r$ (we proved this in the proof of Theorem \ref{subsection:Proof-main}) and $\pi_{\omega_j}\big(\mathfrak{c}(X)\big)$ does not drop dimension for any $j \in [n]$, Lemma \ref{lem:x-reconstruction} gives that the data $\mathfrak{c}(X), \, \pi_\Omega(X)$ uniquely determine $X$. Hence, $\pi_\Omega(X)$ uniquely determines $X$ on $U_1$ for any $k$-rational $X \in U_1$, that is $X$ is the only $k$-rational point of $\pi_\Omega^{-1}(\pi_\Omega(X)) \cap U_1$. 

\emph{Step 2.} For any $(i,j) \not\in \Omega$ we define a polynomial $f_{ij}$. Since $\Omega$ is a base set, $\Omega \cup \{(i,j)\}$ is a dependent set in the algebraic matroid of $\M(r,m \times n)$.
There is a unique circuit $C_{ij}$ contained in $\Omega \cup \{(i,j)\}$; otherwise, if $C_1, C_2$ are distinct circuits of $\Omega \cup \{(i,j)\}$, then necessarily $(i,j) \in C_1 \cap C_2$ and by a fundamental property of circuits (e.g., see part (3) of Definition 6 in \cite{rosen2020algebraic}) there must be a circuit $C_3$ contained in $C_1 \cup C_2 \setminus \{(i,j)\}$, a contradiction on the independence of $\Omega$. Let $Z_{C_{ij}} = \{z_{\alpha \beta}: \, (\alpha,\beta) \in C_{ij}\}$. The ideal $I_{r+1}(Z) \cap k[Z_{C_{ij}}]$ of $k[Z_{C_{ij}}]$ is prime, because it is the kernel of the map $k[Z_{C_{ij}}] \rightarrow k[Z] / I_{r+1}(Z)$ and the target ring is an integral domain. Thus $k[Z_{C_{ij}}] / I_{r+1}(Z) \cap k[Z_{C_{ij}}]$ is an integral domain of Krull dimension $\# C_{ij} - 1$. Then for dimension reasons, $I_{r+1}(Z) \cap k[Z_{C_{ij}}]$ must be a principal ideal generated by an irreducible polynomial that we call $f_{ij}$. By elimination theory, $f_{ij}$ can be determined from a Gr{\"o}bner basis computation as follows. Equip $k[Z]$ with a lexicographic order in which the variables $Z_{C_{ij}}$ are least significant. Then $f_{ij}$ is the unique member in $k[Z_{C_{ij}}]$ of a reduced Gr{\"o}bner basis of $I_{r+1}(Z)$ in the above lexicographic order. This Gr{\"o}bner basis can be obtained by applying Buchberger's algorithm in the $(r+1)$-minors of $Z$. Suppose $\operatorname{char} k = 0$. Then all computations in Buchberger's algorithm will take place in the polynomial ring $\mathbb{Q}[Z]$ to furnish $f_{ij}$. If $\operatorname{char}k = p>0$, all computations will take place in the polynomial ring $\mathbb{Z}_p[Z]$, and for large enough $p$ they will yield the same polynomial $f_{ij}$ as in zero characteristic. We conclude that the polynomials $f_{ij}, \, (i,j) \not\in \Omega$, will not depend on $k$. Moreover, $f_{ij}$ remains irreducible over $\overline{k}[Z_{C_{ij}}]$, where $\overline{k}$ is the algebraic closure of $k$, since it still generates a prime ideal.

\emph{Step 3.} Let $d_{ij}$ be the degree of $z_{ij}$ in $f_{ij}$; we will prove that $d_{ij}=1$. 

We first define an open locus $U_{ij} \subset \M(r, m \times n)$. By Step 2, the $f_{ij}$'s do not depend on $k$ (if $\operatorname{char} k >0$ is large enough); we thus assume that $k = \overline{k}$ and if $\operatorname{char}k>0$, we assume that it is larger than every $d_{ij}$. Let $R({f_{ij},f_{ij}'}) \in k[Z_{C_{ij}\setminus \{(i,j)\}}]$ be the resultant of $f_{ij}$ and its derivative $f_{ij}'$, viewed as polynomials in $z_{ij}$ with coefficients in $k[Z_{C_{ij}\setminus \{(i,j)\}}]$ (see section IV.8 in \cite{lang2002graduate} for a description and properties of the resultant and its relation to the discriminant). Suppose $R({f_{ij},f_{ij}'}) = 0$. Viewing $f_{ij}, f_{ij}'$ as polynomials with coefficients in the field of fractions of $k[Z_{C_{ij}\setminus \{(i,j)\}}]$, which we will denote by $\mathscr{K}$, it follows from a basic property of the resultant that $f_{ij}$ and $f_{ij}'$ have a common root $\mathfrak{r}$ in the algebraic closure $\overline{\mathscr{K}}$ of $\mathscr{K}$. Hence, both of them are divisible in $\mathscr{K}[z_{ij}]$ by the minimal polynomial $p_{\mathfrak{r}} \in \mathscr{K}[z_{ij}]$ of $\mathfrak{r}$. This is a contradiction because $f_{ij}$ is irreducible in $k[Z_{C_{ij}\setminus \{(i,j)\}}][z_{ij}]$ and by Gauss's lemma it remains irreducible in $\mathscr{K}[z_{ij}]$. Thus $R({f_{ij},f_{ij}'})$ is a non-zero polynomial and so the discriminant $\Delta({f_{ij},f_{ij}'}) \in k[Z_{C_{ij}\setminus \{(i,j)\}}]$ is also non-zero. Since $\Delta({f_{ij},f_{ij}'})$ is a non-zero polynomial in algebraically independent variables modulo $I_{r+1}(Z)$, the open locus $U_{ij} = \Spec (k[Z]/I_{r+1}(Z))_{\Delta_{ij}}$ is dense. For every closed point $X=(x_{\alpha \beta}) \in U_{ij}$ (which is $k$-rational since $k$ is algebraically closed), the substitution $z_{\alpha \beta} \mapsto x_{\alpha \beta}$ in $f_{ij}$ for $(\alpha, \beta) \in C_{ij} \setminus \{(i,j)\}$ gives a polynomial $f_{ij}(X)$ of degree $d_{ij}$ in $k[z_{ij}]$, which has non-zero discriminant and thus $d_{ij}$ distinct roots in $k$.

Next, we refine the open locus $U_1 \cap U_{ij} $. With $\Omega' = \Omega \cup \{(i,j)\}$,  consider the factorization $\pi_\Omega : \M(r, m \times n) \stackrel{\pi_{\Omega'}}{\longrightarrow} \mathbb{A}^{\Omega'} \stackrel{\pi}{\longrightarrow} \mathbb{A}^\Omega$, where $\pi$ drops the coordinate associated to $(i,j)$. The closure of the image of $\pi_{\Omega'}$ is the hypersurface of $\mathbb{A}^{\Omega'}$ defined by $f_{ij}$; we will denote it by $\mathbb{V}(f_{ij})$. We define two closed sets $W_1$ and $W_2$ of $\mathbb{A}^{\Omega'}$. With $Y = \M(r,m\times n) \setminus U_1 \cap U_{ij}$, the irreducibility of $\M(r,m\times n)$ gives $\dim Y < \dim \M(r, m \times n) = \# \Omega$. Thus we also have $\dim W_1 = \dim \pi_{\Omega'}(Y) < \dim \mathbb{V}(f_{ij}) = \# \Omega$, where $W_1$ is the closure of $\pi_{\Omega'}(Y)$ in $\mathbb{V}(f_{ij})$. Next, the image of $U_1 \cap U_{ij}$ under $\pi_{\Omega'}$ is dense in $\mathbb{V}(f_{ij})$; this is because the dimension of $U_1 \cap U_{ij}$ is $\#\Omega$ and the generic fiber of $\pi_{\Omega'}|_{U_1 \cap U_{ij}}$ is finite. By Chevalley's theorem, $\pi_{\Omega'}(U_1 \cap U_{ij})$ is constructible and so it contains a dense open set $V$ of $\mathbb{V}(f_{ij})$. We set $W_2 = \mathbb{V}(f_{ij})\setminus V$. Since $\mathbb{V}(f_{ij})$ is irreducible (it is defined by the irreducible polynomial $f_{ij}$), $\dim W_2 < \#\Omega$. We have that $W=W_1 \cup W_2$ is a closed subset of $\mathbb{V}(f_{ij})$ with $\dim W < \#\Omega$; hence also $\dim(\pi(W))< \#\Omega$. With $O$ the complement of the closure in $\mathbb{A}^{\Omega}$ of $\pi(W)$, we have that $\pi_\Omega^{-1}(O)$ is a dense open set of $\M(r, m \times n)$ contained in $U_1 \cap U_{ij}$. 

Take any closed point $X \in \pi_\Omega^{-1}(O)$; $X$ lies in $U_{ij}$, so $f_{ij}(X)$ has $d_{ij}$ distinct roots including $x_{ij}$. Suppose $d_{ij}>1$. Let $x$ be a root of $f_{ij}(X)$ different from $x_{ij}$; let $\pi_{\Omega'}(X)^*$ be the $m \times n$ matrix obtained from $\pi_{\Omega'}(X)$ by replacing $x_{ij}$ with $x$ at position $(i,j)$. Suppose that $\pi_{\Omega'}(X)^*$ is not in the image of $\pi_{\Omega'}$. However, $\pi_{\Omega'}(X)^*$ is in the closure of the image of $\pi_{\Omega'}$ because it is a root of $f_{ij}$. Then by construction $\pi_{\Omega'}(X)^* \in W$, whence $\pi_\Omega(X) \in \pi(W)$; this contradicts the choice of $X$. Hence $\pi_{\Omega'}(X)^* \in \im(\pi_{\Omega'})$ for each of the $d_{ij}$ distinct roots of $f_{ij}(X)$. We claim more, that $\pi_{\Omega'}(X)^*$ is in the image of $U_1$; for if not, then $\pi_{\Omega'}(X)^*$ is in the image of $Y$, and so $\pi_{\Omega'}(X)^* \in W$ whence $\pi_\Omega(X) \in \pi(W)$, again contradicting the choice of $X$. We conclude that $\pi_\Omega^{-1}(\pi_\Omega(X)) \cap U_1$ contains $d_{ij}$ distinct elements; these are all closed points (and thus $k$-rational) because $\pi_\Omega^{-1}(\pi_\Omega(X))$ is zero-dimensional. But by what we proved about $U_1$ in Step 1, $X$ is the only closed point of $\pi_\Omega^{-1}(\pi_\Omega(X))$; this is a contradiction on the hypothesis that $d_{ij}>1$. 

\emph{Step 4.} For each $(i,j) \not\in \Omega$ write $f_{ij} = g_{ij} z_{ij}-h_{ij} \in I_{r+1}(Z)$ with $g_{ij},h_{ij} \in k[Z_\Omega]$. Denote by $g$ the product of all $g_{ij}$'s, and let $ U_\Omega=\Spec(k[Z]/I_{r+1}(Z))_g$ be the dense open locus of $\M(r, m \times n)$ where $g$ does not vanish. Take a prime ideal $Q \in \Spec(k[Z]/I_{r+1}(Z))_g$ and let $P$ be the contraction of $Q$ under the ring homomorphism $k[Z_\Omega] \rightarrow (k[Z]/I_{r+1}(Z))_g$. Then the fiber $\pi_\Omega^{-1}(P)$ is the spectrum of the fiber ring $R=k[Z] / I_{r+1}(Z) \otimes_{k[Z_\Omega]} \kappa(P)$, where $\kappa(P)$ is the residue class field of $P$. Note $z_{ij} = h_{ij} / g_{ij}$ in $(k[Z] / I_{r+1})_g$; thus $(k[Z] / I_{r+1})_g = k[Z_\Omega]_g / J$ for some ideal $J$ of $k[Z_\Omega]_g$. Since we have an equality of Krull dimensions $\dim (k[Z] / I_{r+1})_g = \dim k[Z_\Omega] = \dim k[Z_\Omega]_g$ and $k[Z_\Omega]_g$ is an integral domain, $J=0$ and so $(k[Z] / I_{r+1})_g = k[Z_\Omega]_g$. Hence $$R = k[Z] / I_{r+1}(Z) \otimes_{k[Z_\Omega]} \kappa(P) =(k[Z] / I_{r+1}(Z))_g \otimes_{k[Z_\Omega]_g} \kappa(P)= \kappa(P),$$ and the fiber consists of a single point $Q$.

\section{Examples} \label{section:Examples}

\begin{example} \label{ex:SZ93}
Let $r=2, m=6$ and consider $\Phi=\bigcup_{j \in [4]} \phi_j \times \{j\} \subset [m] \times [m-r]$ with 
$$ \Phi = \{2,4,6\} \times \{1\} \, \, \cup \, \, \{1,2,4\} \times \{2\} \, \, \cup \, \, \{1,2,5\} \times \{3\} \, \, \cup \, \, \{1,3,5\} \times \{4\}  $$
and its representation by its indicator matrix:
$$ \begin{bmatrix}
0& 1 & 1 & 1  \\
1& 1 & 1 & 0 \\
0 & 0 & 0 & 1 \\
1 & 1 & 0 & 0 \\
0 & 0 & 1 & 1 \\
1& 0& 0& 0 
\end{bmatrix} $$ 

\noindent This is a $(2,6)$-SLMF since it satisfies condition \eqref{eq:SLMF}. It defines an open set $V_\Phi$ in $\Gr(2,6)$ on which the rational map $\Gr(2,6) \rightarrow \mathbb{P}^2 \times \mathbb{P}^2 \times \mathbb{P}^2 \times \mathbb{P}^2$ given by 
$$S \mapsto \left(\begin{bmatrix} \begin{array}{r} [46]_S\\ -[26]_S  \\ [24]_S \end{array} \end{bmatrix},  \begin{bmatrix} \begin{array}{r} [24]_S\\ -[14]_S \\ [12]_S \end{array}\end{bmatrix}, \begin{bmatrix} \begin{array}{r} [25]_S\\ -[15]_S \\ [12]_S \end{array} \end{bmatrix}, \begin{bmatrix} \begin{array}{r} [35]_S\\ -[15]_S \\ [13]_S \end{array} \end{bmatrix} \right)$$ 

\noindent is injective. These $4$ elements of $\mathbb{P}^2$ are precisely the normal vectors of the $4$ planes in $k^3$ that one gets by projecting a general $2$-dimensional subspace $S$ in $k^6$ onto the $3$ coordinates indicated by each of the $\phi_j$'s. 

For each $S \in V_\Phi$ and $B \in k^{6 \times 2}$ a $k$-basis of $S$, the following matrix is the unique up to a scaling of its columns $6 \times 4$ matrix supported on $\Phi$, whose columns span $S^\perp$:
$$ \begin{bmatrix}
\begin{array}{rrrr}
0 \, \, \, \, \, \, \,& [24]_S^B & [25]_S^B & [35]_S^B \\
[46]_S^B & -[14]_S^B & -[15]_S^B & 0 \, \, \, \, \, \, \, \\
0 \, \, \, \, \, \, \, & 0 \, \, \, \, \, \, \, & 0 \, \, \, \, \, \, \, & -[15]_S^B \\
-[26]_S^B  & [12]_S^B & 0 \, \, \, \, \, \, \, & 0 \, \, \, \, \, \, \, \\
0 \, \, \, \, \, \, \,& 0 \, \, \, \, \, \, \,& [12]_S^B & [13]_S^B \\
[24]_S^B & 0 \, \, \, \, \, \, \,& 0 \, \, \, \, \, \, \, & 0 \, \, \, \, \, \, \, \\
\end{array}
\end{bmatrix}$$ 
The polynomial that defines the complement of $V_\Phi$ is
$$p_\Phi = \det\left( 
\begin{bmatrix}
\begin{array}{rrrr}
[24]_S & [25]_S & [35]_S \\
0 \, \, \, \, \, \, \, & 0 \, \, \, \, \, \, \, & -[15]_S \\
0 \, \, \, \, \, \, \,& [12]_S & [13]_S \\
\end{array}
\end{bmatrix} 
\right) = [12]_S [24]_S [15]_S. $$ 
\end{example}

\begin{example} \label{ex:main-multiplePoints}
Let $r=2, m=6, n=8$ and $\Omega$ with $\#\Omega=24=\dim \M(2,6 \times 8)$ given by
$$\Omega = \begin{bmatrix}
1 & 0 & 0 & 1 & 0 & 1 & 1 & 1 \\
1 & 0 & 1 & 0 & 1 & 1 & 0 & 0 \\
1 & 0 & 1 & 0 & 1 & 0 & 1 & 0 \\
0 & 1 & 1 & 0 & 0 & 0 & 0 & 1 \\
0 & 1 & 0 & 1 & 1 & 0 & 0 & 0 \\
0 & 1 & 0 & 1 & 0 & 1 & 1 & 1
\end{bmatrix}.$$ With $\mathcal{T}_1=\{1,2,3,4\}$ and $\mathcal{T}_2=\{5,6,7,8\}$, the $\Phi_1$ and $\Phi_2$ above are the leftmost and rightmost respectively blocks of $\Omega$ and they both satisfy \eqref{eq:SLMF}. A computation with \texttt{Macaulay2} suggests that $\pi_\Omega^{-1}(\pi_\Omega(X))$ consists of $2$ points over a non-algebraically closed field and $4$ points otherwise. 
\end{example}

\begin{example} \label{ex:main}
Let $r=2, \, m=6, \, n=5$ and consider the following $\Omega \subset [6] \times [5]$ with $\# \Omega = 18 = \dim \M(2,6 \times 5)$ represented by its indicator matrix:
$$ \Omega = \begin{bmatrix} 
1 & 0 & 0 & 1 & 1 \\
1& 0 & 1 & 1 & 0 \\
1& 0 & 0& 0 & 1 \\
1 & 1 & 1 & 1 & 0 \\
1 & 1 & 0 & 1 & 1\\
0 & 1 & 0 & 1 & 0
\end{bmatrix} $$ Consider the partition $[5] = \mathcal{T}_1 \cup \mathcal{T}_2$ with $\mathcal{T}_1=\{1,2\}$ and $\mathcal{T}_2=\{3,4,5\}$. Now take 
$$ \Phi_1 = \begin{bmatrix} 
1 & 1 & 1 & 0 \\
1& 1 & 1 & 0 \\
1 & 0 & 0 & 0 \\
0 & 1 & 0 & 1 \\
0 & 0 & 1 & 1 \\
0& 0& 0& 1 
\end{bmatrix},  \, \, \, \Phi_2 = \begin{bmatrix} 
0& 1 & 1 & 1  \\
1& 1 & 1 & 0 \\
0 & 0 & 0 & 1 \\
1 & 1 & 0 & 0 \\
0 & 0 & 1 & 1 \\
1& 0& 0& 0 
\end{bmatrix}.$$ $\Phi_1$ and $\Phi_2$ are $(2,6)$-SLMF's since they satisfy \eqref{eq:SLMF}. $\Phi_1$ is associated with the first $2$ columns of $\Omega$ ($\mathcal{T}_1$), while $\Phi_2$ with the last $2$ columns of $\Omega $ ($\mathcal{T}_2$). A computation with \texttt{Macaulay2} suggests that $\pi_\Omega^{-1}(\pi_\Omega(X))$ consists only of $X$, for general $X$. 
\end{example}

\begin{example} \label{ex:flexibility}
This example demonstrates that Theorem \ref{thm:main} can be used with a certain flexibility in conjunction with Lemma \ref{lem:omega-r} and the operation of matrix transposition. 

It can be checked from Definition \ref{dfn:relaxedSLMF} that the following $\Omega \subset [6] \times [5]$, written in matrix indicator form, is a relaxed $(2,2,6)$-SLMF:
\begin{align*}
\Omega = 
\begin{bmatrix}
1 & 1 & 1 & 0 & 0 \\
1 & 0 & 1 & 0 & 0 \\
1 & 1 & 1 & 0 & 0 \\
1 & 0 & 0 & 1 & 1 \\
0 & 1 & 1 & 1 & 1 \\
0 & 1 & 0 & 1 & 1 
\end{bmatrix}
\end{align*} On the other hand, it is impossible to find a partition $[5] = \J_1 \cup \J_2$ such that both $\Omega_{\J_1}$ and $\Omega_{\J_2}$ are relaxed $(1,2,6)$-SLMF's; one of them will violate the relaxed $(1,2,6)$-SLMF condition for $\I = [6]$. But observe that the second row of $\Omega$ has only two $1$'s, i.e. not every vertex of $\mathcal{G}_\Omega$ has degree $\ge r+1$. Removing that row we obtain
\begin{align*}
\Omega' = 
\begin{bmatrix}
1 & 1 & 1 & 0 & 0 \\
1 & 1 & 1 & 0 & 0 \\
1 & 0 & 0 & 1 & 1 \\
0 & 1 & 1 & 1 & 1 \\
0 & 1 & 0 & 1 & 1 
\end{bmatrix}.
\end{align*} By Lemma \ref{lem:omega-r} applied on the matrix transpose of $\Omega$, we have that $\Omega$ is a base of the matroid of $\M(2, 6 \times 5)$ if and only if $\Omega'$ is a base of the matroid of $\M(2, 5 \times 5)$. One easily checks that $\Omega'$ is a relaxed $(2,2,5)$-SLMF and it admits the partition into the following two relaxed $(1,2,5)$-SLMF's, hence $\Omega$ is a base set:
\begin{align*}
\Omega'_{\{1,3,4\}} = 
\begin{bmatrix}
1  & 1 & 0  \\
1  & 1 & 0  \\
1  & 0 & 1  \\
0  & 1 & 1  \\
0  & 0 & 1  
\end{bmatrix}, \, \, \, 
\Omega'_{\{2,5\}} = 
\begin{bmatrix}
 1 & 0 \\
 1  & 0 \\
 0  & 1 \\
1  & 1 \\
 1  & 1 
\end{bmatrix}
\end{align*} 
\end{example}

\bibliographystyle{amsalpha}
\bibliography{DeterminantalMatroid-arxiv-Feb23}

\end{document}